\documentclass[12pt]{amsart}
\usepackage[usenames]{color}
\usepackage{amsmath,verbatim,graphicx,epstopdf,enumerate}
\usepackage{tabularx}
\usepackage{booktabs}
\usepackage{amsmath,amsfonts,amsthm,mathrsfs,amssymb,cite}
\setlength{\topmargin}{-.25in}
\setlength{\textheight}{9in}
\setlength{\textwidth}{7in}
\setlength{\headheight}{26pt}
\setlength{\headsep}{20pt}
\setlength{\oddsidemargin}{-0.25in}
\setlength{\evensidemargin}{-0.25in}
\usepackage[usenames]{color}
\usepackage[notref,notcite]{}
\usepackage[pagewise]{lineno}
\newtheorem{theorem}{Theorem}[section]
\newtheorem{lemma}[theorem]{Lemma}

\newtheorem{corollary}[theorem]{Corollary}

\newtheorem{remark}[theorem]{Remark}

\numberwithin{equation}{section}

\usepackage{hyperref}

\newcommand{\PD}{\partial}

\newcommand{\bp}{\begin{prob}}
	\newcommand{\ep}{\end{prob}}
\newcommand{\bpr}{\begin{proof}}
	\newcommand{\epr}{\end{proof}}

\newcommand{\lb}{\left(}

\newcommand{\rb}{\right)}

\newcommand{\Cb}{\mathbb{C}}

\newcommand{\Rb}{\mathbb{R}}
\newcommand{\Sb}{\mathbb{S}}

\newcommand{\D}{\mathrm{d}}

\usepackage{amsmath}
\theoremstyle{definition}

\newcommand{\bel}[1]{\begin{equation}\label{#1}}
	\newcommand{\ee}{\end{equation}}

\newcommand{\be}{\begin{eqnarray}}
\newcommand{\ben}{\begin{eqnarray*}}
\newcommand{\en}{\end{eqnarray}} 
\newcommand{\enn}{\end{eqnarray*}}

\title[Inverse electromagnetic scattering]{Inverse time-harmonic electromagnetic scattering from coated polyhedral scatterers with a single far-field pattern}
\author[Hu, Vashisth and Yang]{Guang-Hui Hu$^{\dagger}$, Manmohan Vashisth$^{\ddagger}$ and Jiaqing Yang$^{*}$ }
\address{$^{\dagger}$School of Mathematical Sciences and LPMC, Nankai University, Tianjin 300071, China
	\newline
	\indent E-mail: {\tt ghhu@nankai.edu.cn}}
\address{$^{\ddagger}$Department of Mathematics, Indian Institute of Technology, Jammu  181221, India
	\newline 
	\indent E-mail: {\tt  manmohan.vashisth@iitjammu.ac.in}}
\address{$^{*}$School of Mathematics and Statistics,
Xi'an Jiaotong University, Xi'an 710049,  P. R. China.
	\newline
	\indent E-mail: {\tt  jiaq.yang@mail.xjtu.edu.cn}}

\begin{document}
	\maketitle
	
	\begin{abstract}    
	It is proved that a convex polyhedral scatterer of impedance type can be uniquely determined by the electric far-field pattern of a non-vanishing incident field. The incoming wave is allowed to { be} an electromagnetic plane wave, a vector Herglotz wave function or a point source wave incited by some magnetic dipole.
Our proof relies on the reflection principle for Maxwell's equations with the impedance (or Leontovich) boundary condition enforcing on a hyper-plane. We prove that it is impossible to analytically extend the total field across any vertex of the scatterer. This leads to a data-driven inversion scheme for imaging an arbitrary convex polyhedron.
	\end{abstract}\vspace{2mm}
	
	{ \textbf{Keywords:} Uniqueness, inverse electromagnetic scattering, polyhedral scatterer, reflection principle, impedance boundary condition, single incident wave, data-driven scheme. }		\\\vspace{2mm} 
	
	   \textbf{Mathematics subject classification 2010:} 35R30,78A46, 35J15.
	   
	\section{Introduction and Main Result}
The propagation of time-harmonic electromagnetic waves in a homogeneous isotropic medium in $\Rb^{3}$ is modelled by the Maxwell's equations
\begin{align}\label{Maxwell equation in R3}
\nabla\times E(x) -ik H(x)=0, \ \nabla \times H(x)+ik E(x)=0\qquad \mbox{for}\ x\in\Rb^{3},
\end{align}
where  $E$ and $H$  represent the electric and magnetic fields respectively and $k>0$ is known as the wave number. Let $E^{in}$ and $H^{in}$ satisfying Equation  \eqref{Maxwell equation in R3} denote  the incident electric and magnetic fields respectively. Consider the scattering of given incoming waves $E^{in}$ and $H^{in}$ from a convex polyhedral scatterer $D\subset \Rb^3$ coated by a thin dielectric layer, which can be modelled by the impedance (or Leontovich) boundary value problem of the Maxwell's equations \eqref{Maxwell equation in R3}
in $\Rb^3\backslash\overline{D}$.
Then the total fields $E=E^{in}+E^{sc}$,  $H=H^{in}+H^{sc}$, where $E^{sc}$ and $H^{sc}$ denote the scattered fields, are governed by the following set of Equations \eqref{Maxwell equation in exterior}-\eqref{Boundary condition scattering}:
\begin{align}
&\label{Maxwell equation in exterior} \nabla \times E-ik H=0,\ \ \ \ \nabla\times H+ik E=0\ \;\mbox{in}\;\Rb^{3}\backslash{\overline{D}},\\
&\label{Total field} E= E^{in}+E^{sc},\ H=H^{in}+H^{sc}\ \mbox{in}\ \Rb^{3}\backslash{\overline{D}},\\
&\label{Radiation condition} \lim_{\lvert x\rvert \rightarrow \infty}\lb H^{sc}\times x-\lvert x\rvert E^{sc}\rb =0,\\
&\label{Boundary condition scattering} \nu\times\lb \nabla\times E\rb +i\lambda\nu\times\lb\nu\times E\rb=0\ \mbox{on} \ \PD D,
\end{align}
where $\nu$ denotes the outward unit normal to $\PD D$ and the impedance coefficient $\lambda>0$ is supposed to be a constant.
Equation \eqref{Radiation condition} is known as the Silver-M\"uller radiation condition and is uniform in all directions $\widehat{x}:=x/|x|$. For the  existence and uniqueness of the solution $(E,H)$ to the  forward system \eqref{Maxwell equation in exterior}-\eqref{Boundary condition scattering}, we refer to  \cite{Colton_Kress_Book} when $\PD D$ is $C^{2}$-smooth and to \cite{CCM,CHP} when $\PD D$ is Lipschitz with connected exterior. Moreover, the Silver-M\"uller radiation condition \eqref{Radiation condition}
ensures that scattered fields $E^{sc}$ and $H^{sc}$ satisfy the following asymptotic behaviour (see \cite{Colton_Kress_Book})
\begin{align}
&\label{Asymptotic expansion electric}E^{sc}(x)=\frac{e^{ik\lvert x\rvert}}{\lvert x\rvert}\lb E^{\infty}\lb\widehat{x}\rb +O\lb \frac{1}{\lvert x\rvert}\rb\rb,\ \mbox{as $\lvert x\rvert \rightarrow \infty$},\\
&\label{Asymptotic expansion magnetic} H^{sc}(x)=\frac{e^{ik\lvert x\rvert}}{\lvert x\rvert}\lb H^{\infty}\lb \widehat{x}\rb +O\lb \frac{1}{\lvert x\rvert}\rb\rb,\ \mbox{as $\lvert x\rvert \rightarrow \infty$}
\end{align}
where the vector fields $E^{\infty}$ and $H^{\infty}$ defined on the unit sphere $\Sb^{2}$, are called the electric and magnetic far-field patterns of the scattered waves $E^{sc}$ and $H^{sc}$,  respectively. It is well known that $E^{\infty}$ and $H^{\infty}$ are analytic functions with respect to the observation direction $\widehat{x}\in \Sb^2$
and satisfy the following relations
\begin{align}\label{Properties of far-field pattern}
H^{\infty}=\nu\times E^{\infty},\;\; \nu\cdot E^{\infty}=\nu\cdot H^{\infty}=0,
\end{align}
where $\nu$ denotes the unit normal vector to the unit sphere $\Sb^{2}$.

Given the incoming wave $(E^{in},H^{in})$   and the scatterer $D\subset\Rb^{3}$,
the direct problem arising from electromagnetic scattering is to find the scattered fields $(E^{sc}, H^{sc})$ and  their  far-field patterns. The inverse problem to be considered in this paper consists of determining the location and shape of $D$  from knowledge of the far-field patterns $(E^\infty, H^\infty)$.
We assume that the incident fields $E^{in}$ and $H^{in}$ are non-vanishing vector fields which are solutions to the Maxwell's equations \eqref{Maxwell equation in R3} in a neighboring area of the obstacle $D$.
For instance one can take the incident fields $E^{in}$ and $H^{in}$ to be one among the following:
\begin{enumerate}
	\item \textbf{Plane waves:}
	\begin{align}\label{Plane wave field}
	E^{in}(x,d,p)=pe^{ik x\cdot d},\qquad  H^{in}(x,d,p)=(d\times p)e^{ik x\cdot d}
	\end{align}
	where $d\in\Sb^{2}$ is known as the incident direction and $p\in \Sb^{2}$ with $p\perp d$ is known as the polarization direction.
	\item \textbf{Point source waves:}
	\begin{align}\label{Point source waves}
	E^{in}=\nabla\times \lb \Phi(x,y)\vec{a}\rb, \ \ H^{in}(x)=\frac{1}{ik}\nabla\times E^{in}(x),\ \ x\neq y,
	\end{align}
	where $\vec{a}$ is a constant vector and $\Phi(x,y):= \frac{1}{4\pi}\frac{e^{ik\lvert x-y\rvert}}{\lvert x-y\rvert}$,\ $x\neq y$ is the fundamental solution to $\Delta +k^{2}$ in $\Rb^{3}$. $\lb E^{in},H^{in}\rb$ given by (\ref{Point source waves}) represent the electromagnetic field generated by a magnetic dipole located at $y$ and they solve the Maxwell's equations \eqref{Maxwell equation in R3} for $x\neq y$.
	\item \textbf{Electromagnetic Herglotz pairs:}
	\begin{align}\label{Herglotz wave fields}
	E^{in}(x)=\int\limits\limits_{\Sb^{2}}e^{ikx\cdot d}a(d)ds(d), \ \ H^{in}(x)=\frac{1}{ik}\nabla\times E^{in}(x), \ x\in\Rb^{3},
	\end{align}
	where the square integrable tangential field  $a\in L_t^2(\Sb^{2})^3$ is known as the   Herglotz kernel of $E^{in}$ and $H^{in}$.
\end{enumerate}
The present article is concerned with a uniqueness result of determining  the convex polyhedral scatterer $D$ appearing in the system of Equations \eqref{Maxwell equation in exterior}-\eqref{Boundary condition scattering} from the knowledge of a single electric far-field pattern $E^{\infty}(\widehat{x})$ measured over all observation directions $\widehat{x}\in\Sb^2$. We mention that  in  case of plane wave incidence, the incident direction $d\in \Sb^{2}$, the polarization direction $p\in \Sb^{2}$ and the wave number $k>0$  are all  fixed.

We state our main result as follows:
\begin{theorem}\label{Main theorem scattering}
	Let $D_{1}$ and $D_{2}$ be two convex polyhedral scatterers of impedance type. Given an incident field $(E^{in},H^{in})$ as mentioned above,  we denote by $ E_{j}^{\infty}$ ($j=1,2$) the electric far-field patterns of the scattering problem  \eqref{Maxwell equation in exterior}-\eqref{Boundary condition scattering} when $D=D_{j}$. Then the relation
	\begin{align}\label{Equality of far-field patterns}
	E^{\infty}_{1}(\widehat{x}) = E^{\infty}_{2}(\widehat{x})\quad \mbox{for all} \ \widehat{x}\in\Sb^{2}
	\end{align}
	implies that  $D_{1}=D_{2}$.
\end{theorem}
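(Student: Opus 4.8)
The plan is to argue by contradiction. Suppose $D_1 \neq D_2$ while the electric far-field patterns coincide, and write $G$ for the unbounded connected component of $\Rb^3 \setminus \overline{D_1 \cup D_2}$. Since $E_1^{\infty} = E_2^{\infty}$ on $\Sb^2$, Rellich's lemma together with the real-analyticity of solutions of the Maxwell system forces $E_1^{sc} = E_2^{sc}$ and $H_1^{sc} = H_2^{sc}$ throughout $G$; as the incident pair $(E^{in},H^{in})$ is common to both scattering problems, the total fields agree there as well, so $E_1 = E_2 =: E$ in $G$. (When the incoming wave is a point source one first fixes its source point $y$ in $\Rb^3 \setminus \overline{D_1 \cup D_2}$, so that $E^{in}$ is analytic in a neighborhood of all of $G$; for plane waves and Herglotz pairs $E^{in}$ is entire, so no such restriction is needed.)

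Next I would use convexity to produce an \emph{exposed vertex}. Since $D_1 \neq D_2$ and both sets are convex, at least one of $D_1 \setminus \overline{D_2}$ or $D_2 \setminus \overline{D_1}$ is nonempty; say the former. Then $D_1$ has a vertex $x^{*} \in \Rb^3 \setminus \overline{D_2}$ lying on $\partial G$, and we may fix a ball $B = B(x^{*}, r)$ with $\overline{B} \cap \overline{D_2} = \emptyset$. On $B$ the field $E_2$ solves the Maxwell system and is therefore real-analytic across $x^{*}$, whereas on $B \cap G = B \setminus \overline{D_1}$ we have $E_1 = E_2$. Consequently $E_2$ furnishes a genuine analytic continuation of the total field $E_1$ of $D_1$ from the exterior of $D_1$ across the vertex $x^{*}$, even though $E_1$ satisfies the impedance condition $\nu \times (\nabla \times E_1) + i\lambda\, \nu \times (\nu \times E_1) = 0$ on the finitely many faces of $\partial D_1$ meeting at $x^{*}$. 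The task is to show that this is impossible.

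The heart of the argument is a reflection principle for the Maxwell system under the Leontovich condition on a hyperplane, followed by a repeated-reflection (path) argument that rules out analytic extension across a vertex. Concretely, if a Maxwell field satisfies the impedance condition on a flat piece of a plane $\Pi$, then it extends across $\Pi$ to a Maxwell solution on a two-sided neighborhood via an explicit reflection formula built from the mirror map in $\Pi$, with the signs on the tangential and normal components dictated by $\nu \times (\nabla \times E) + i\lambda\, \nu \times (\nu \times E) = 0$. Starting from $x^{*}$, I would reflect $E$ successively across the planes carrying the faces adjacent to $x^{*}$; compositions of pairs of such reflections are rotations about the edges through $x^{*}$, and iterating them continues $E$ analytically around the vertex. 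Following a closed chain of reflections, the accumulated relations become overdetermined and force $E$, and hence $E^{in}$ (since $E^{sc}$ is analytic and radiating), to vanish near $x^{*}$, contradicting the hypothesis that $(E^{in},H^{in})$ is non-vanishing. This contradiction precludes the analytic extension of Step~2, so $D_1 = D_2$.

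The main obstacle is the technical core of the last step: deriving the correct reflection formula for the curl-curl impedance condition, which is more delicate than the Dirichlet (perfect-conductor) or Neumann cases because the condition couples $\nabla \times E$ with the tangential trace through the constant $\lambda$, and then controlling how the reflected fields propagate and interact at an edge or vertex where several faces meet. Care is needed to guarantee that the reflected solutions are single-valued and genuinely real-analytic across the lower-dimensional strata, and to handle dihedral angles at $x^{*}$ that are irrational multiples of $\pi$, where the reflection group is infinite and the continuation must be organized so as to yield a well-defined limiting contradiction. By comparison, Rellich's lemma, the convexity-based extraction of the exposed vertex, and the treatment of the three admissible incident fields are routine.
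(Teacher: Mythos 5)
Your opening steps (Rellich's lemma to identify the total fields in the unbounded component, convexity to expose a vertex $x^{*}$ of $\partial D_1$ away from $\overline{D}_2$, and the recognition that a reflection principle for the Leontovich condition is the missing tool) match the paper. The gap is in your core mechanism. You assume the impedance reflection is a \emph{point-to-point} mirror map, so that composing reflections in two faces gives a rotation about their common edge, and you then invoke a reflection-group / closed-chain argument around $x^{*}$. This is precisely what fails here: the paper proves (Theorem \ref{Main theorem reflection}) that the extension across a plane under the condition $\nu\times(\nabla\times E)+i\lambda\,\nu\times(\nu\times E)=0$ is \emph{non-local} --- the extended components $(\mathcal{D}E)_j$ involve one-dimensional integrals of $E$ and $\partial_j E_3$ along the normal direction, see \eqref{E3 tilde extension Theorem}--\eqref{Extension formula for Ej Theorem} --- and it is valid only on symmetric domains $\Omega$ whose segments project into $\gamma$. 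Consequently, composing two such extensions is not a rigid motion, there is no dihedral/reflection group to iterate, and ``following a closed chain of reflections'' is not a well-defined operation; the difficulty you flag about irrational dihedral angles is secondary to this structural obstruction. Your final step also needs repair: from ``$E$ vanishes near $x^{*}$'' one gets $E^{in}=-E^{sc}$ locally, not $E^{in}=0$, so an extra argument (e.g. that $E^{in}$ would then be a radiating entire solution) is required.

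For contrast, the paper never continues the field \emph{around} the vertex. It uses analyticity of $E_2$ in $\Rb^3\setminus\overline{D}_2$ to push the impedance condition onto the maximal planar extensions $\widetilde{\Pi}_j$ of the faces meeting at $x^{*}$; then, if no $\widetilde{\Pi}_j$ is an entire plane, it applies the reflection principle only \emph{locally along segments} (Corollary \ref{cor3.2}) to propagate analyticity of $E_2$ from corner to corner of a face, until the impedance condition holds on a full hyperplane with $D_2$ on one side. A single half-space reflection (Corollary \ref{cor3.1}) then extends $E_2$ to an entire radiating solution, forcing $E_2^{sc}\equiv 0$. The contradiction is obtained globally, not near $x^{*}$: for Herglotz waves, $E^{in}$ solves the impedance boundary value problem in $D_2$, and the energy identity
\begin{equation*}
\int_{D_{2}}\lvert \nabla\times E^{in}\rvert^{2}-k^{2}\lvert E^{in}\rvert^{2}\,\D x+i\lambda \int_{\PD D_{2}}\lvert \nu\times E^{in}\rvert^{2}\D s=0
\end{equation*}
plus Holmgren's theorem gives $E^{in}\equiv 0$; for point sources the contradiction is the analyticity of $E_2$ at the source point; and for plane waves an elementary algebraic computation with the normals suffices, with no reflection principle at all. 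You would need to replace your reflection-group step by an argument of this kind (or prove a genuinely point-to-point impedance reflection law, which the paper shows does not exist) before the proof can close.
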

It is widely open how to uniquely determine the shape of a general impenetrable/penetrable scatterer using a single far-field pattern. {As in the acoustic case \cite{Alessandrini_Rondi_acoustic,BPS, Cheng_Yamamoto, LiuC}},  quite limited progress has also been made in inverse time-harmonic electromagnetic scattering. To the best of our knowledge,  global uniqueness with a single measurement data is proved only for perfectly conducting obstacles with restrictive geometric shapes such like balls \cite{Kress2001} and convex polyhedrons \cite[Chapter 7.1]{Colton_Kress_Book}.
Without the convexity condition, it was shown in \cite{Liu_Yamamoto_Zou_Reflection principle_sound-soft_Maxwell} that a \emph{general} perfect polyhedral conductor (the closure of which may contain screens) can be uniquely determined
by the far-field pattern for plane wave incidence with one direction and two polarizations.  We shall prove Theorem \ref{Main theorem scattering} by using the reflection principle for Maxwell's equations with the impedance boundary condition enforcing on a hyper-plane. It seems that such a reflection principle has not been studied in prior works, although the corresponding principle under the perfectly conducting boundary condition is well known in optics (see e.g.\cite{Liu_Yamamoto_Zou_Reflection principle_sound-soft_Maxwell}).
Theorem \ref{Main theorem scattering} carries over to perfectly conducting polyhedrons with a single far-field pattern (see Corollary \ref{PEC}), and thus improves the acoustic uniqueness result for impedance scatterers \cite{Cheng2003} where two incident directions were used.
It is also worth mentioning other works in the literature related to  reflection principles for the Helmholtz and Navier equations together with their applications to uniqueness in inverse acoustic and elastic scattering \cite{Alessandrini_Rondi_acoustic,Cheng_Yamamoto,Colton1977, Elshner_Yamamoto_polygonal,EY2010,Elschner_Hu_Elastic,  Liu_Zou_}.
We believe that the reflection principle, as a special case of unique continuation, provides a powerful tool for gaining new insights into inverse scattering problems.
More remarks concerning our uniqueness proof will be concluded in Section \ref{sec:remarks}.

In the second part of this paper, we shall propose a novel non-iterative scheme for imaging an arbitrary convex-polyhedron from a single electric far-field pattern.
The Linear Sampling Method in inverse electromagnetic scattering \cite{Cakoni,CCM,CHM,CHP} was earlier studied with infinite number of plane waves at a fixed energy.
We are mostly motivated by the uniqueness proof of Theorem \ref{Main theorem scattering} (see also Corollary \ref{singular}) and the one-wave factorization method in inverse elastic and acoustic scattering \cite{Elschner_Hu_Elastic, HuLi2020}. The proposed scheme is essentially a domain-defined sampling approach, requiring no forward solvers. Promising features of our imaging scheme are summarised as follows.

(i) It requires lower computational cost and only a single measurement data. The proposed domain-defined indicator function involves only inner product calculations. Since the number of sampling variables is comparable with the original Linear Sampling Method and Factorization Method (\cite{Cakoni, Colton_Kress_Book, Kirsch08}),
the computational cost is not heavier than the aforementioned pointwise-defined sampling methods.
(ii) It can be interpreted as a data-driven approach, because it relies on measurement data corresponding to a priori given scatterers (which are also called test domains in the literature or samples in the terminology of learning theory and data science). There is a variety of choices on the shape and physical properties of these samples, giving arise to quite \textquotedblleft rich\textquotedblright a priori sample data in addition to the measurement data of the unknown target. In this paper, we choose perfectly conducting balls (or impedance balls) as test domains, because the spectra of the resulting far-field operator admit explicit representations. However, these test domains can also be chosen as any other convex penetrable and impenetrable scatterers, provided the classical factorization scheme for imaging this test domain can be verified using all incident and polarization directions. We refer to \cite{Kirch2004} for the Factorization Method applied to inverse electromagnetic medium scattering problems.
(iii) It provides a necessary and sufficient criterion for imaging convex polyhedrons (see Theorem \ref{th4.2}) with a single incoming wave. We prove that the wave fields cannot be analytic around any vertex of $D$ (see Corollary \ref{singular}),  excluding the possibility of analytical extension across a vertex. Some other domain-defined sampling approaches such as the range test approach  \cite{KPS, KS} and the one-wave no-response test \cite{P2003,P2007}  usually pre-assume such extensions, leading to a sufficient condition for imaging general targets. Our approach is closest to the No Response Test for reconstructing perfectly conducting polyhedral scatterers with a few incident plane waves \cite{PS2010} and is comparable with the
one-wave enclosure method by Ikehata \cite{Ik2000, Ik1999} for capturing singular points of $\partial D$. {Detailed discussions on the issue of analytic continuation tests can be found in the monograph \cite[Chapter 15]{NP2013} (see also \cite{HNS}).} If $\partial D$ contains no singular points, only partial information of $D$ can be numerically recovered; see \cite{LS} where the linear sampling method with a single far-field pattern was tested.

We organize the article as follows.  In \S \ref{Uniqueness with plane wave} we prove Theorem \ref{Main theorem scattering} when the incident fields are given by \eqref{Plane wave field}. In \S \ref{Reflection principle} we state and prove the reflection principle for Maxwell's equations with the impedance  boundary condition on a hyper-plane in $\Rb^{3}$ (see Theorem \ref{Main theorem reflection}).  Using this reflection principle we prove in \S \ref{Main theorem} the main  uniqueness results for electromagnetic Herglotz waves and point source waves. The data-driven reconstruction scheme will be described in \S \ref{numeric}.
	   	\section{Uniqueness with a Single Plane Wave}\label{Uniqueness with plane wave}
	   In this section, we prove  Theorem \ref{Main theorem scattering}, when the incident fields $(E^{in},H^{in})$ are given by
	   \begin{align*}
	   E^{in}(x,d,p)=pe^{ik x\cdot d},\qquad  H^{in}(x,d,p)=(d\times p)e^{ik x\cdot d}
	   \end{align*}
	   where $d,p\in \Sb^{2}$ satisfying $p\perp d$ and $k>0$ are all fixed.
	   Now recall from Equation \eqref{Equality of far-field patterns} that  $E_{1}^{\infty}(\widehat{x})=E_{2}^{\infty}(\widehat{x})$ for all $\widehat{x}\in\Sb^{2}$. Using the Rellich's lemma (see \cite{Colton_Kress_Book}) we get
	   \[E_{1}=E_{2}\ \mbox{and}\ H_{1}=H_{2}\ \mbox{in}\ \Rb^{3}\setminus\lb \overline{D_{1}\cup D_{2}}\rb.\]
	   Assuming that $D_{1}\neq D_{2}$, we shall prove the uniqueness by deriving a contradiction. By the convexity of $D_1$ and $D_2$,  we may assume that  there exists a vertex $O$ of $\PD D_{1}$ and a neighborhood $V_{O}$ of $O$ such that $V_{O}\cap \overline{{D}_{2}}=\emptyset$. Next using the impedance boundary condition of $E_1$ on $\PD D_{1}$ and $E_{1}=E_{2}$ in $\Rb^{3}\setminus{\lb \overline{{D}_{1}\cup D_{2}}\rb}$, we have that $\nu\times \lb \nabla\times E_{2}\rb+i\lambda\nu\times\lb\nu\times E_{2}\rb=0$  on $V_{O}\cap \PD D_{1}$.
	   Since $D_{1}$ is a convex polyhedron,  there exists $m$ ($m\geq 3$) convex polygonal faces $\Lambda_{j}$ ($j=1,2\cdots m$) of $\partial D_{1}$ whose closure meet at  $O$ and which can be analytically extended to infinity in $\Rb^3\backslash\overline{D}_2$; for example, see Figure \ref{f1} on page 12, where $m=4$ (left) and $m=3$ (right). Denote by $\widetilde{\Pi}\supseteq \Lambda_j$ the maximum extension of
	   $\Lambda_j$ in $\Rb^3\backslash\overline{D}_2$. 
	   Using the real-analyticity of $E_{2}$ in $\Rb^3\backslash\overline{D}_2$ together with the fact that $\lambda>0$ is a constant, we conclude that $E_{2}$ satisfies the impedance boundary condition on $\widetilde{\Pi}_j$. Recalling \eqref{Radiation condition} and \eqref{Asymptotic expansion electric}, we have
	   $\lim\limits_{\lvert x\rvert\rightarrow \infty}\lvert  \nabla\times E^{sc}_{2}\rvert=0, \ \ \  \lim\limits_{\lvert x\rvert\rightarrow \infty}\lvert E^{sc}_{2}\rvert=0.$
	   Hence,
	   \begin{align}\label{Impedance BC for incident}
	   \nu\times \lb \nabla\times E^{in}\rb+i\lambda\nu\times\lb\nu\times E^{in}\rb=0 \mbox{ on} \  \widetilde{\Pi}_{j},\quad   j=1,2,\cdots m.
	   \end{align}
	   By Equation \eqref{Plane wave field}, it then follows that
	   \begin{align}\label{pd}
	   ik \nu\times\lb d\times p\rb +i\lambda \nu\times \lb \nu\times p\rb =0,\
	   \end{align}
	   holds for any outward unit normal $\nu$ to $\widetilde{\Pi}_{j}$. Without loss of generality, we suppose that $p=\textbf{e}_{1}$,  $d\times p=\textbf{e}_{2}$,  $\nu=c_{1}\textbf{e}_{1}+c_{2}\textbf{e}_{2}+c_{3}\textbf{e}_{3}$ with $c_{1}^{2}+c_{2}^{2}+c_{3}^{2}=1$, where $\textbf{e}_j\in \Sb^{2}$ ($j=1,2,3$) denotes the Cartesian coordinates in $\Rb^3$.
	   By \eqref{pd}, simple calculations show that
	   
	   \[	-\lb kc_{3}+\lambda(c_{2}^{2}+c_{3}^{2})\rb \textbf{e}_{1}+\lambda c_{1}c_{2}\textbf{e}_{2}+\lb kc_{1}+\lambda c_{1}c_{3}\rb \textbf{e}_{3}=0.\]
	   This gives us the equations of $c_j$, $$kc_{3}=-\lambda \lb c_{2}^{2}+c_{3}^{2}\rb,\quad \lambda c_{1}c_{2}=0,\quad kc_{1}=-\lambda c_{1}c_{3},$$
	   which have the following solutions for $\nu=(c_1,c_2,c_3)\in \Sb^2$:
	   \ben
	   \nu&=&(0,0,-1),\qquad\qquad\,\,\qquad\qquad\quad\mbox{if}\quad k=\lambda;\\
	   \nu&=&
	   \lb \pm\sqrt{1-(k/\lambda)^2}, 0,-k/\lambda\rb ,\qquad \mbox{if}\quad k<\lambda;\\
	   \nu&=&
	   \lb 0, \pm\sqrt{1-(\lambda/k)^2},-\lambda/k\rb ,\qquad \mbox{if}\quad k>\lambda.
	   \enn
	   Hence, the relation \eqref{pd} cannot hold for three linearly independent unit normal vectors $\nu$.
	   This contradiction implies that $D_{1}=D_{2}$.
	   \begin{remark}
	   	The above uniqueness proof  with a single plane wave cannot be applied  to the Helmholtz equation in two dimensions under the impedance boundary condition $\PD_{\nu}u+i\lambda u=0$. In 2D case, we deduce a corresponding relation $k\nu\cdot d+\lambda =0$, which holds for only two linearly independent unit normal vectors if $D_1\neq D_2$. However, this cannot lead to a contradiction when $k>\lambda$. It was proved in \cite{Cheng2003} that the far-field patterns of two incident directions uniquely determine a convex polygonal obstacle of impedance type.
	   \end{remark}
	   
	   Since the above proof relies on the form of electromagnetic plane waves therefore it is not applicable to the incident fields given by electromagnetic point source waves and vector Herglotz wave functions. For these kind of incident fields, we shall apply the reflection principle for Maxwell's equations to prove Theorem \ref{Main theorem scattering}; see Sections \ref{Reflection principle} and subsection \ref{Main theorem} below.
	\section{Reflection Principle for Maxwell's Equations}\label{Reflection principle}
Let $\Omega\subseteq \Rb^{3}$ be an open connected set which is symmetric with respect to a plane $\Pi$ in $\Rb^{3}$ and we define by  $\gamma:=\Omega\cap \Pi$.
Denote by $\Omega^{+}$ and $\Omega^{-}$  the two symmetric parts of $\Omega$  which are divided by $\Pi$ and by $R_{\Pi}$ the reflection operator about $\Pi$, that is, if $x\in\Omega^{\pm}$ then $R_{\Pi}x\in\Omega^{\mp}$ for $x=(x_1, x_2, x_3)\in \Omega$.
Throughout this article,    $\Omega$ will be assumed in such a way that any line segment with end points in $\Omega$ and intersected with $\Pi$ by the angle $\pi/2$ lies completely in $\Omega$. In other words, the projection of any line segment in $\Omega$ onto the hyperplane $\Pi$ is a subset of $\gamma$.
This geometrical condition was also used in \cite{Diaz_Ludford-Reflection principle} where the reflection principle for the Helmholtz equation with the impedance boundary condition was verified in $\Rb^n$ ($n\geq 2$).
Now consider the time-harmonic  Maxwell's equations with the impedance boundary condition by
\begin{align}
\label{Main equation}&\nabla\times E-ikH=0,\ \ \nabla\times H+ik E=0,\ && \mbox{in}\quad\Omega^{+},\\
\label{Boundary condition}&\nu\times\lb\nabla\times E\rb+i\lambda\nu\times\lb\nu\times E\rb=0,\ && \mbox{on}\quad \gamma\subset \Pi.
\end{align}
It is well known from  (Theorem $6.4$ in \cite{Colton_Kress_Book}) that a solution $(E,H)$ of  Equation \eqref{Main equation} satisfies the vectorial Helmholtz equations with the divergence-free condition:
\begin{align}\label{Divergence free and Helmholtz equation}
\Delta E+k^{2}E=0,\ \Delta H+k^{2}H=0,\quad \nabla\cdot E=\nabla\cdot H=0.
\end{align}
	Since Equations \eqref{Divergence free and Helmholtz equation} and  \eqref{Main equation} are rotational invariant,  without loss of generality we can assume that the plane $\Pi$ mentioned above coincides with the $ox_1x_2$-plane, i.e., $\Pi=\{(x_{1},x_{2},x_{3})\in\Rb^{3}:\ x_{3}=0\}$. Consequently, we have $\nu=\textbf{e}_3:=(0,0,1)^T$ and $R_\Pi x=(x_1, x_2, -x_3)$.
In this  section, we study the reflection principle for solutions to the Maxwell's equation satisfying the impedance  boundary condition on $\gamma$. Our aim is to
extend the solution $(E,H)$ of Equation \eqref{Main equation} from $\Omega^{+}$ to $\Omega^{-}$ by an analytical formula. The reflection principle is stated as follows.
	\begin{theorem}\label{Main theorem reflection}
	Let $\Pi:= \{\lb x',x_{3}\rb\in\Rb^{3}: x_{3}=0\}$ with $x':=(x_1, x_2)$,  $\gamma \subset \Omega\cap \Pi$ and $\Omega^{\pm}:=\{(x_{1},x_{2},x_{3})\in\Omega:\ \pm x_{3}>0\}$. Assume that  $(E,H)$ satisfies Equation \eqref{Main equation} with the boundary condition \eqref{Boundary condition}. Then $(E,H)$ can be analytically extended to $\Omega^{-}$ as a solution to \eqref{Main equation}. Moreover, the extended electric field $\widetilde{E}:=\lb\widetilde{E}_{1},\widetilde{E}_{2},\widetilde{E}_{3}\rb^{T}$ is given explicitly by
	\begin{align}\label{Extension formula}
	\begin{aligned}
	\widetilde{E}(x',x_{3})=
	\begin{cases}
	E(x),\ \ \mbox{if}\ x\in \Omega^{+}\cup\gamma\\
	\mathcal{D}E(x',-x_{3}),\ \ \mbox{if}\ x\in \Omega^{-}
	\end{cases}
	\end{aligned}
	\end{align}
	where the operator $\mathcal{D}E:=\left( (\mathcal{D}E)_{1},(\mathcal{D}E)_{2},(\mathcal{D}E)_{3}\right)^{T}$ is defined by
	\be\label{E3 tilde extension Theorem}
	\ \ \ &&(\mathcal{D}{E})_{3}(x)
	:=E_{3}(x',x_{3})-\frac{2k^{2}}{i\lambda}\int\limits\limits_{0}^{x_{3}}e^{\frac{k^{2}}{i\lambda}(s-x_3)}E_{3}(x',s)\D s\ \ \mbox{for} \  x\in\Omega^{+},\\
	\label{Extension formula for Ej Theorem}
	&&(\mathcal{D}{E})_{j}(x',x_{3}):=
	E_{j}(x',x_{3})+2i\lambda \int\limits\limits_{0}^{x_{3}}e^{-i\lambda (s-x_3)}E_{j}(x',s)\D s \\ \nonumber
	&& +\frac{2\lambda^{2}}{k^{2}-\lambda^{2}}\int\limits\limits_{0}^{x_{3}}e^{-i\lambda (s-x_3)}\PD_{j}E_{3}(x',s)\D s
	-\frac{2k^{2}}{k^{2}-\lambda^{2}}\int\limits\limits_{0}^{x_{3}}e^{\frac{k^{2}}{i\lambda}(s-x_3)}\PD_{j}E_{3}(x',s)\D s
	\en
	for $j=1,2$ and  $x\in\Omega^{+}$.
	
\end{theorem}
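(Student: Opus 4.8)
The plan is to reduce the vector reflection principle to a family of scalar problems for the Cartesian components of $E$ and then to apply a Diaz--Ludford type reflection to each. First I would rewrite the vectorial impedance condition \eqref{Boundary condition} with $\nu=\mathbf{e}_3$ in coordinates: a direct computation of $\mathbf{e}_3\times(\nabla\times E)$ and $\mathbf{e}_3\times(\mathbf{e}_3\times E)$ shows that the third scalar component of \eqref{Boundary condition} is vacuous, while the two tangential components yield
\[
\partial_3 E_j=\partial_j E_3-i\lambda E_j\quad(j=1,2)\qquad\text{on }\gamma.
\]
The decisive step is to decouple the normal component: differentiating these two identities tangentially in $x_1,x_2$, summing, and eliminating $\partial_1E_1+\partial_2E_2$ via $\nabla\cdot E=0$ and $\partial_3^2E_3$ via the Helmholtz equation in \eqref{Divergence free and Helmholtz equation}, I obtain the self-contained Robin condition
\[
\partial_3 E_3=\frac{k^2}{i\lambda}\,E_3\qquad\text{on }\gamma.
\]

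Next I would prove, as a lemma, the scalar reflection principle: if $u$ solves $\Delta u+k^2u=0$ in $\Omega^+$ and $\partial_3u=\alpha u$ on $\gamma$ for a constant $\alpha$, then the operator $\mathcal{D}u(x',x_3):=u(x',x_3)-2\alpha\int_0^{x_3}e^{\alpha(s-x_3)}u(x',s)\,\d s$, evaluated at $(x',-x_3)$, extends $u$ to a Helmholtz solution on $\Omega$. Here the geometric hypothesis on $\Omega$ guarantees that the vertical segment $\{(x',s):0\le s\le -x_3\}$ joining $x\in\Omega^-$ to $R_\Pi x\in\Omega^+$ stays in $\Omega$, so the integral is well defined. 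The verification splits into a $C^1$-matching at $x_3=0$, which follows from the vanishing of the integral there together with the Robin condition, and the Helmholtz identity in $\Omega^-$, which I would obtain by integrating the kernel term by parts twice and using $\partial_3u=\alpha u$ on $\gamma$ to cancel all boundary contributions. Taking $\alpha=k^2/(i\lambda)$ reproduces \eqref{E3 tilde extension Theorem} exactly.

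For the tangential components the boundary condition is an \emph{inhomogeneous} Robin condition $\partial_3 E_j+i\lambda E_j=\partial_j E_3$, so I would construct $(\mathcal{D}E)_j$ as the homogeneous reflection of $E_j$ with parameter $-i\lambda$ (this accounts for the first two terms of \eqref{Extension formula for Ej Theorem}) plus a particular correction driven by the source $\partial_j E_3$. Since $E_3$ carries the exponential rate $k^2/(i\lambda)$ while the homogeneous problem carries the rate $-i\lambda$, the correction separates into the two exponential modes of \eqref{Extension formula for Ej Theorem}, with weights $\tfrac{2\lambda^2}{k^2-\lambda^2}$ and $-\tfrac{2k^2}{k^2-\lambda^2}$; this partial-fraction structure requires $k\neq\lambda$, and the resonant case $k=\lambda$ (where both rates coincide) is recovered by a limiting argument. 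I would again verify $C^1$-matching and the Helmholtz equation for each $\tilde E_j$ by integration by parts, now carrying the coupling terms.

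Finally I would establish the two facts that promote a componentwise Helmholtz extension to a genuine Maxwell solution: that $\tilde E$ is $C^1$ across $\gamma$, so the piecewise definition is a distributional and hence, by analytic hypoellipticity of $\Delta+k^2$, a real-analytic Helmholtz solution on all of $\Omega$; and that $\nabla\cdot\tilde E=0$ persists in $\Omega^-$. Setting $\tilde H:=\frac{1}{ik}\nabla\times\tilde E$ then recovers \eqref{Main equation}, because for a divergence-free field $\nabla\times(\nabla\times\tilde E)=-\Delta\tilde E=k^2\tilde E$, which gives $\nabla\times\tilde H+ik\tilde E=0$. I expect the main obstacle to be the tangential-component analysis: checking simultaneously the Helmholtz equation and the preservation of $\nabla\cdot\tilde E=0$ forces the coupling between $\tilde E_j$ and $\partial_j\tilde E_3$ to cancel precisely, and it is exactly this cancellation that pins down the coefficients in \eqref{Extension formula for Ej Theorem}.
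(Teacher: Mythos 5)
Your proposal is sound, and its skeleton coincides with the paper's proof in the decisive first half: the componentwise form $\partial_3 E_j=\partial_j E_3-i\lambda E_j$ ($j=1,2$) of the impedance condition, and the decoupled Robin condition $\partial_3 E_3=\frac{k^2}{i\lambda}E_3$ on $\gamma$ obtained by tangential differentiation combined with $\nabla\cdot E=0$ and the vectorial Helmholtz equation, are exactly the paper's computations (there packaged via the auxiliary scalar $V=\frac{\partial_1F_1+\partial_2F_2}{i\lambda}$), after which both you and the paper extend $E_3$ by the Diaz--Ludford reflection. Where you genuinely diverge is the tangential components. You posit the two-exponential form of $(\mathcal{D}E)_j$ as a variation-of-parameters ansatz (homogeneous $-i\lambda$-mode reflection of $E_j$ plus a correction driven by $\partial_j E_3$) and let the verification pin down the weights $\frac{2\lambda^2}{k^2-\lambda^2}$ and $-\frac{2k^2}{k^2-\lambda^2}$. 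The paper instead derives the formula constructively: it observes that $F_j:=\partial_j E_3-\partial_3 E_j-i\lambda E_j$ solves the Helmholtz equation with the \emph{Dirichlet} condition $F_j=0$ on $\gamma$, extends $F_j$ by the elementary point-to-point odd reflection, and then recovers $\widetilde E_j$ by integrating the first-order ODE $\partial_3\widetilde E_j+i\lambda\widetilde E_j-\partial_j\widetilde E_3=-\widetilde F_j$ with integrating factor $e^{i\lambda x_3}$; after interchanging the order of integration, the two modes and their coefficients fall out automatically. The paper's $F_j$-device thus buys a derivation with no undetermined constants, while your route avoids the auxiliary field at the cost of pushing that work into the verification stage --- which both arguments must carry out anyway (matching of Cauchy data on $\gamma$, the Helmholtz equation in $\Omega^-$, and $\nabla\cdot\widetilde E=0$; the paper's Steps 1--3), and your final passage from a divergence-free Helmholtz extension back to the Maxwell system via $\widetilde H=\frac{1}{ik}\nabla\times\widetilde E$ is the paper's implicit closing step. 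Your flagging of the resonant case $k=\lambda$, which the stated formula with denominators $k^2-\lambda^2$ silently excludes, is a fair observation that the paper does not address.
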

	Obviously, the extension operator $(\mathcal{D}{E})_{3}$ only relies on $E_3$ in $\Omega^+$, whereas $(\mathcal{D}{E})_{j}$ ($j=1,2$) depends on both $E_j$ and $E_3$. {The formula given by  \eqref{Extension formula} is a \textquoteleft non-point-to-point' reflection formula which is in contrast with the \textquoteleft point-to-point' reflection formula for the Maxwell's equations with the Dirichlet boundary condition (see \cite{Liu_Yamamoto_Zou_Reflection principle_sound-soft_Maxwell}). As $\lambda\rightarrow\infty$, the impedance boundary condition will be reduced to the Dirichlet boundary condition $\nu\times E=0$ on $\Pi$.
	It is easy to observe that $(\mathcal{D}{E})_{3}\rightarrow E_3$ and by applying integration by part that $(\mathcal{D}{E})_{j}\rightarrow-E_j$ for $j=1,2$ as $\lambda$ tends to infinity. Hence, the reflection formula \eqref{Extension formula} becomes $\widetilde{E}(x)=-R_\Pi E(R_\Pi x)$ for $x\in \Omega$, which is valid for any symmetric domain with respect to $\Pi=\{x: x_3=0\}$.
}
	Before going to the proof of Theorem \ref{Main theorem reflection}, we first state the reflection principle for the Helmholtz equation with an impedance  boundary condition. The result in the following Lemma \ref{Lemma reflection principle Helmholtz} has already been proved in \cite{Diaz_Ludford-Reflection principle}. 
\begin{lemma}\cite{Diaz_Ludford-Reflection principle}\label{Lemma reflection principle Helmholtz}
	Let $\Omega,\Pi$,$\gamma$ and $\Omega^{\pm}$ be defined as in Theorem \ref{Main theorem reflection}. If $u$ is a solution to the boundary value problem of the Helmholtz equation
	\begin{equation}\label{Helmholtz equation u}
	\Delta u+k^{2}u=0\quad \mbox{in}\ \quad\Omega^{+},\qquad
	\PD_{\nu}u+i\lambda u=0\quad \mbox{on}\quad\gamma,
	\end{equation}
	then $u$ can be extended from $\Omega^{+}$ to $\Omega$ as a solution to the Helmholtz equation, with the extended solution $\widetilde{u}$  given by the formula $\widetilde{u}:=u$ in $\Omega^{+}\cup \gamma$ and
	\begin{equation}\label{u tilde explicit}
	\widetilde{u}(x):=
	u(x_{1},x_{2},-x_{3})+2i\lambda e^{-i\lambda x_{3}}\int\limits\limits_{0}^{-x_{3}}e^{-i\lambda s}u(x_{1},x_{2},s)\D s \quad\mbox{in}\quad\Omega^{-}.
	\end{equation}
\end{lemma}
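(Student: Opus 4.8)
The plan is to take the explicit formula \eqref{u tilde explicit} as given and verify directly that it defines a solution of the Helmholtz equation on all of $\Omega$; the entire argument turns on spending the impedance condition $\partial_\nu u+i\lambda u=0$ at two precise moments. Throughout I fix the transverse variable $x'=(x_1,x_2)$, abbreviate $u(s):=u(x',s)$, and write $g(x'):=u(x',0)$, $h(x'):=\partial_{x_3}u(x',0)$ for the Cauchy data of $u$ on $\gamma$ (assuming, as is implicit in the hypothesis, enough regularity of $u$ up to $\gamma$ for these traces to exist), so that \eqref{Helmholtz equation u} reads $h+i\lambda g=0$ on $\gamma$. I would first note that the integral in \eqref{u tilde explicit} is well posed: for $x=(x',x_3)\in\Omega^-$ one has $-x_3>0$, and by the symmetry of $\Omega$ the reflected point $(x',-x_3)$ lies in $\Omega^+$; the standing geometric assumption that a segment perpendicular to $\Pi$ with endpoints in $\Omega$ lies in $\Omega$ (with its projection inside $\gamma$) guarantees that $\{(x',s):0\le s\le -x_3\}\subset\overline{\Omega^+}$ with $(x',0)\in\gamma$, so $u(x',s)$ is defined all along the path of integration.

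Next I would verify the interior equation in $\Omega^-$. Write $\widetilde u(x',x_3)=u(x',-x_3)+2i\lambda\,I(x',x_3)$, where $I(x',x_3):=e^{-i\lambda x_3}\int_0^{-x_3}e^{-i\lambda s}u(x',s)\,\D s$. The reflected term $u(x',-x_3)$ solves Helmholtz automatically, since $\Delta+k^2$ is invariant under $x_3\mapsto -x_3$. For the integral term I would replace $\Delta_{x'}u$ by $-\partial_s^2u-k^2u$ under the integral sign (using that $u$ solves Helmholtz along the segment), integrate by parts twice in $s$, and compare with the elementary identity $\partial_{x_3}^2I=-\lambda^2I+i\lambda\,u(x',-x_3)+(\partial_{x_3}u)(x',-x_3)$. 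The twofold integration by parts produces exactly one surviving boundary contribution at $s=0$, proportional to $\partial_s u(x',0)+i\lambda u(x',0)=h+i\lambda g$, which vanishes by the impedance condition; after this cancellation the remaining terms combine to give $(\Delta+k^2)I=0$, and hence $(\Delta+k^2)\widetilde u=0$ in $\Omega^-$. This is the first and decisive use of the boundary condition.

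Then I would establish $C^1$-matching across $\gamma$. Continuity is immediate from $I(x',0)=0$, which gives $\widetilde u(x',0^-)=u(x',0)=g(x')$, so the tangential derivatives agree automatically as derivatives of this common trace. For the normal derivative, differentiating \eqref{u tilde explicit} and setting $x_3=0^-$ yields $\partial_{x_3}\widetilde u(x',0^-)=-h-2i\lambda g$; invoking $h=-i\lambda g$ once more collapses this to $h=\partial_{x_3}u(x',0^+)$. Thus $\widetilde u\in C^1(\Omega)$ and solves $\Delta\widetilde u+k^2\widetilde u=0$ classically in $\Omega^+$ and in $\Omega^-$ separately. Finally I would glue the halves: for any $\phi\in C_c^\infty(\Omega)$, Green's identity on $\Omega^+$ and $\Omega^-$ leaves interface integrals over $\gamma$ weighted by the jumps $[\widetilde u]$ and $[\partial_{x_3}\widetilde u]$, both of which vanish by the $C^1$-matching; hence $\int_\Omega\widetilde u\,(\Delta+k^2)\phi=0$, so $\widetilde u$ is a distributional solution of Helmholtz in all of $\Omega$. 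By analytic hypoellipticity of $\Delta+k^2$, $\widetilde u$ is real-analytic and a genuine classical solution, which completes the extension.

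I expect the main obstacle to be the bookkeeping in the twofold integration by parts of the second step, namely tracking precisely which boundary term at $s=0$ survives, since that is exactly where the impedance condition must be consumed; the remaining ingredients are either the reflection-invariance of the equation or a standard transmission-plus-regularity argument.
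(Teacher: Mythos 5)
Your proposal is correct, but note that the paper itself contains no proof of this lemma: it is quoted verbatim from Diaz--Ludford \cite{Diaz_Ludford-Reflection principle}, so your argument cannot coincide with a proof in the paper --- it is a self-contained substitute for the citation. Your computations check out: with $h+i\lambda g=0$ (i.e.\ $\nu=\mathbf{e}_3$, the convention forced by the formula), one has $\partial_{x_3}I=-i\lambda I-u(x',-x_3)$ and hence $\partial_{x_3}^2 I=-\lambda^2 I+i\lambda u(x',-x_3)+\partial_3 u(x',-x_3)$, while substituting $\Delta_{x'}u=-\partial_s^2u-k^2u$ and integrating by parts twice leaves boundary terms $e^{i\lambda x_3}\bigl(\partial_3 u+i\lambda u\bigr)(x',-x_3)$ at $s=-x_3$, which cancel against the $\partial_{x_3}^2I$ identity, and $-(h+i\lambda g)$ at $s=0$, which vanishes by the impedance condition --- exactly as you predicted; likewise $\partial_{x_3}\widetilde u(x',0^-)=-h-2i\lambda g=h$ gives the $C^1$ match, and the transmission-plus-hypoellipticity gluing is standard. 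It is worth observing that your verification strategy mirrors precisely the structure the paper uses for its own Maxwell analogue (the proof of Theorem \ref{Main theorem reflection}): there the authors first \emph{derive} the extension formula constructively, by reducing to an ODE $\partial_3\widetilde E_j+i\lambda\widetilde E_j-\partial_j\widetilde E_3=-\widetilde F_j$ along normal lines, and then verify it in three steps (Cauchy-data matching on $\gamma$, the Helmholtz equation in $\Omega^-$, the divergence-free condition). Your route skips the derivation and goes straight to verification of the given formula; what this buys is brevity and independence from the 1955 reference, at the cost of not explaining where the formula comes from --- which is immaterial for the lemma as stated. Your handling of the well-posedness of the integral via the paper's standing geometric assumption on $\Omega$ (perpendicular segments with endpoints in $\Omega$ lie in $\Omega$, with projection in $\gamma$) is also the right place to use that hypothesis, and is the same role it plays in the paper's Maxwell proof.
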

	Our proof of Theorem
\ref{Main theorem reflection} is essentially motivated by the proof of Lemma \ref{Lemma reflection principle Helmholtz}.
\subsection{Proof of Theorem \ref{Main theorem reflection}}
From Equation \eqref{Main equation}, we deduce that $E$ is a solution to
\begin{align}
\nabla\times\lb \nabla\times E\rb -k^{2}E=0\ \ \mbox{in}\ \Omega^{+},\quad
\textbf{e}_3\times\lb \nabla\times E\rb +i\lambda \textbf{e}_3\times\lb\textbf{e}_3\times E\rb =0 \ \ \mbox{on}\ \gamma.
\end{align}
	Define the vector function $F$ and the scalar function $V$ by 
\begin{align}\nonumber
F&:=\textbf{e}_3\times\lb \nabla\times E\rb +i\lambda\textbf{e}_3\times\lb \textbf{e}_3\times E\rb=:\lb F_{1},F_{2},0\rb^{T}\quad\mbox{in}\quad\Omega^+,\\ \label{F}
&F_1\ = \PD_{1}E_{3}-\PD_{3}E_{1}-i\lambda E_{1},\quad F_2=\PD_{2}E_{3}-\PD_{3}E_{2}-i\lambda E_{2},\\
\nonumber V&:=\frac{\PD_{1}F_{1}+\PD_{2}F_{2}}{i\lambda}=\frac{\PD_{1}^{2}E_{3}-\PD_{31}^{2}E_{1}-i\lambda \PD_{1}E_{1}+\PD_{2}^{2}E_{3}-\PD_{32}^{2}E_{2}-i\lambda\PD_{2}E_{2}}{i\lambda}.
\end{align}
	Now using $\Delta E+k^{2}E=0$ and $\nabla \cdot E=0$ in $\Omega^{+}$,  we have
\[V=\frac{-\PD_{3}^{2}E_{3}-k^{2}E_{3}-\PD_{3}\lb \PD_{1}E_{1}+\PD_{2}E_{2}\rb-i\lambda \lb \PD_{1}E_{1}+\PD_{2}E_{2}\rb}{i\lambda}=\PD_{3}E_{3}-\frac{k^{2}}{i\lambda}E_{3}. \]
Since $F_{1}=F_{2}=0$ on $\gamma\subseteq\{x\in\Rb^{3}:\ x_{3}=0\}$, we get $\PD_{1}F_{1}=\PD_{2}F_{2}=0$ on $\gamma$ and thus
\begin{align}
\Delta E_{3}+k^{2}E_{3}=0\quad\mbox{in}\quad\Omega^{+},\quad
\PD_{3}E_{3}-k^{2}/(i\lambda)\; E_{3}=0\quad\mbox{on}\quad \gamma.
\end{align}
Applying Lemma \ref{Lemma reflection principle Helmholtz}, we can extend $E_{3}$ from $\Omega^{+}$ to $\Omega$ by
$\widetilde{E}_{3}:=E_3$ in $\Omega^{+}\cup\gamma$ and
\begin{align*}
\widetilde{E}_{3}(x):=
E_{3}(x',-x_{3})-\frac{2k^{2}}
{i\lambda}e^{\frac{k^{2}}{i\lambda}x_{3}}\int\limits\limits_{0}^{-x_{3}}e^{\frac{k^{2}}{i\lambda}s}E_{3}(x',s)\D s\ \mbox{in}\ \Omega^{-},
\end{align*}
which gives the extension formula for $E_{3}$.
	To find the extension formula for $E_{j}$ ($j=1,2$), we observe that
$F_{j}$ ($j=1,2$) given by (\ref{F})
satisfy the Helmholtz equation with the Dirichlet boundary condition,
\begin{align*}
\Delta F_{j}+k^{2}F_{j}=0\ \mbox{in} \ \Omega^{+},\qquad
F_{j}=0\ \mbox{on}\ \gamma.
\end{align*}
Applying the reflection principle with the Dirichlet boundary condition (see \cite{Diaz_Ludford-Reflection principle}), we can extend $F_j$ through
$\widetilde{F}_{j}:=F_j$ in $\Omega^{+}\cup \gamma$ and $\widetilde{F}_{j}(x):=-F_j(x',-x_3)$ in $\Omega^-$.
	As done for the Helmholtz equation, we will derive the extension formula for $E_{j}$ for $j=1,2$ by considering the boundary value problem of the ordinary differential equation (cf. (\ref{F}))
\begin{align*}
\PD_{3}\widetilde{E}_{j}+i\lambda \widetilde{E}_{j}-\PD_{j}\widetilde{E}_{3}=-\widetilde{F}_{j}\quad\mbox{in}\quad
\Omega,\qquad \widetilde{E}_{j}=E_j\quad\mbox{on}\quad\gamma,
\end{align*}
where $\widetilde{E}_{j}$ ($j=1,2$) denote the extended functions.
Multiplying the above equation by $e^{i\lambda x_{3}}$ and integrating between $0$ to $x_{3}$, we have
\begin{align*}
\int\limits\limits_{0}^{x_{3}}e^{i\lambda s}\lb \PD_{s}\widetilde{E}_{j}(x',s)+i\lambda \widetilde{E}_{j}(x',s)\rb \D s-\int\limits\limits_{0}^{x_{3}}e^{i\lambda s}\PD_{j}\widetilde{E}_{3}(x',s)\D s=-\int\limits\limits_{0}^{x_{3}}e^{i\lambda s}\widetilde{F}_{j}(x',s)\D s,
\end{align*}
which gives us
\begin{align*}
\widetilde{E}_{j}(x)=e^{-i\lambda x_{3}}E_{j}(x',0)+\int\limits\limits_{0}^{x_{3}}e^{i\lambda (s-x_3)}\PD_{j}\widetilde{E}_{3}(x',s)\D s-\int\limits\limits_{0}^{x_{3}}e^{i\lambda (s-x_3)}\widetilde{F}_{j}(x',s)\D s.
\end{align*}
	Since $\widetilde{F}_{j}=F_{j}$ and $\widetilde{E}_{3}=E_{3}$ in $\in\Omega^{+}$,  the above equation can be rewritten as
\begin{align*}
\widetilde{E}_{j}(x)&=e^{-i\lambda x_{3}}E_{j}(x',0)+e^{-i\lambda x_{3}}\int\limits\limits_{0}^{x_{3}}e^{i\lambda s}\PD_{j}{E}_{3}(x',s)\D s\\
&\ \ \ \ -e^{-i\lambda x_{3}}\int\limits\limits_{0}^{x_{3}}e^{i\lambda s}\lb \PD_{j}E_{3}(x',s)-\PD_{s}E_{j}(x',s)-i\lambda E_{j}(x',s)\rb\D s
\end{align*}
which can be proved to be identical with $E_j$ in $\Omega^+$ by applying the integration by parts.
Next, we want to simplify the expression of $\widetilde{E}_{j}(x)$ in $\Omega^{-}$. Using the expression for $\widetilde{F}_{j}$  and $\widetilde{E}_{3}$ (see \eqref{E3 tilde extension Theorem}), we obtain
	\ben
\widetilde{E}_{j}(x)
&=&e^{-i\lambda x_{3}}E_{j}(x',0)+\int\limits\limits_{0}^{x_{3}}e^{i\lambda (s-x_3)}\lb \PD_{j}E_{3}(x',-s)+\PD_{s} E_{j}(x',-s)-i\lambda E_{j}(x',-s)\rb\D s\\
&+&\int\limits\limits_{0}^{x_{3}}e^{i\lambda (s-x_3)} \PD_{j}E_{3}(x',-s)\D s-\frac{2k^{2}}{i\lambda} \int\limits\limits_{0}^{x_{3}}e^{i\lambda (s-x_3)} \lb \int\limits\limits_{0}^{-s}e^{\frac{k^{2}}{i\lambda}(t+s)}\PD_{j}E_{3}(x',t)\D t\rb\D s.
\enn
This gives
	\begin{align*}
\widetilde{E}_{j}(x)
&= E_{j}(x',-x_{3})+2i\lambda \int\limits\limits_{0}^{-x_{3}}e^{-i\lambda (s+x_3)}E_{j}(x',s)\D s-2\int\limits\limits_{0}^{-x_{3}}e^{-i\lambda (s+x_3)}\PD_{j}E_{3}(x',s)\D s\\
&\ \ \ \  \ +\frac{2k^{2}}{i\lambda}\int\limits\limits_{0}^{x_{3}}e^{i\lambda (s- x_{3})}
\int\limits\limits_{0}^{s}e^{-\frac{k^{2}}{i\lambda}(t-s)}\PD_{j}E_{3}(x',-t)\D t\D s.
\end{align*}
	Changing the order of integration in the last term of the previous equation, we obtain
\begin{align*}
\begin{aligned}
\widetilde{E}_{j}(x)&=E_{j}(x',-x_{3})+2i\lambda \int\limits\limits_{0}^{-x_{3}}e^{-i\lambda (s+x_3)}E_{j}(x',s)\D s-2\int\limits\limits_{0}^{-x_{3}}e^{-i\lambda (s+x_3)}\PD_{j}E_{3}(x',s)\D s\\
&\  +\frac{2k^{2}}{i\lambda}e^{-i\lambda x_{3}}\int\limits\limits_{t=0}^{t=x_{3}}e^{-\frac{k^{2}}{i\lambda}t}\PD_{j}E_{3}(x',-t)\int\limits\limits_{s=t}^{s=x_{3}}e^{\frac{k^{2}-\lambda^{2}}{i\lambda}s}\D s\D t,\\
& =E_{j}(x',-x_{3})+2i\lambda \int\limits\limits_{0}^{-x_{3}}e^{-i\lambda (s+x_3)}E_{j}(x',s)\D s-2\int\limits\limits_{0}^{-x_{3}}e^{-i\lambda (s+x_3)}\PD_{j}E_{3}(x',s)\D s\\
&+\frac{2k^{2}}{k^{2}-\lambda^{2}}\int\limits\limits_{0}^{x_{3}}
e^{-\frac{k^{2}}{i\lambda}(s-x_3)}\PD_{j}E_{3}(x',-s)\D s-\frac{2k^{2}}{k^{2}-\lambda^{2}}\int\limits\limits_{0}^{x_{3}}e^{i\lambda (s-x_3)}\PD_{j}E_{3}(x',-s)\D s.
%
\end{aligned}
\end{align*}
	After combining  similar terms, we get
\begin{align*}
\begin{aligned}
\widetilde{E}_{j}(x)&=E_{j}(x',-x_{3})+2i\lambda \int\limits\limits_{0}^{-x_{3}}e^{-i\lambda (s+x_3)}E_{j}(x',s)\D s\\
&+\frac{2\lambda^{2}}{k^{2}-\lambda^{2}}\int\limits\limits_{0}^{-x_{3}}e^{-i\lambda (s+x_3)}\PD_{j}E_{3}(x',s)\D s-\frac{2k^{2}}{k^{2}-\lambda^{2}}\int\limits\limits_{0}^{-x_{3}}
e^{\frac{k^{2}}{i\lambda}(s+x_3)}\PD_{j}E_{3}(x',s)\D s.
\end{aligned}
\end{align*}
This proves Equation \eqref{Extension formula for Ej Theorem}.

 Remark that the right hand side of $\widetilde{E}_{j}$ ($j=1,2$) depends on both $E_j$ and $E_3$ in $\Omega^+$.
	In order to show that $\widetilde{E}_{j}$ given by \eqref{Extension formula for Ej Theorem} and \eqref{E3 tilde extension Theorem} are indeed the required extension formula for $E_{j}$, we need to verify that $\Delta \widetilde{E}_{j}+k^{2}\widetilde{E}_{j}=0$ and  $\nabla\cdot \widetilde{E}=0$ in $\Omega$. For this purpose, we shall proceed with the following three steps.

Step 1. Prove that the Cauchy data of $\widetilde{E}_{j}$ taking from $\Omega^\pm$ are identical on $\gamma$. By Lemma \ref{Lemma reflection principle Helmholtz}, this is true for the third component  $\widetilde{E}_{3}$. On the other hand,
it is  clear from Equation \eqref{Extension formula for Ej Theorem} that $\widetilde{E}_{j}$ ($j=1,2$) are continuous functions in $\Omega$. Therefore, we only need to show that $\PD_{3}^{+}\widetilde{E}_{j}=\PD_{3}^{-}\widetilde{E}_{j}$ on $\gamma$, $j=1,2$. Simple calculations show that
\begin{align*}
\begin{aligned}
\PD_{3}\widetilde{E}_{j}(x)=
\begin{cases}
\PD_{3}E_{j}(x',x_{3})\quad \mbox{in}\ \Omega^{+},\\
-\PD_{3}E_{j}(x',-x_{3})+2\lambda^{2}e^{-i\lambda x_{3}}\int\limits\limits\limits_{0}^{-x_{3}}e^{-i\lambda s}E_{j}(x',s)\D s-2i\lambda E_{j}(x',-x_{3})\\
\ \ -\frac{2i\lambda^{3}}{k^{2}-\lambda^{2}}e^{-i\lambda x_{3}}\int\limits\limits\limits_{0}^{-x_{3}}e^{-i\lambda s}\PD_{j}E_{3}(x',s)\D s -\frac{2\lambda^{2}}{k^{2}-\lambda^{2}}\PD_{j}E_{3}(x',-x_{3})\\
-\frac{2k^{4}}{i\lambda\lb k^{2}-\lambda^{2}\rb}
e^{\frac{k^{2}}{i\lambda}x_{3}}
\int\limits\limits\limits_{0}^{-x_{3}}e^{\frac{k^{2}}{i\lambda}s}\PD_{j}E_{3}(x',s)\D s+\frac{2k^{2}}{k^{2}-\lambda^{2}}\PD_{j}E_{3}(x',-x_{3})\ \mbox{in}\ \Omega^{-},
\end{cases}
\end{aligned}
\end{align*}
	from which it follows that
\begin{align*}
\PD_{3}^-\widetilde{E}_{j}(x',0)=-\PD_{3}^+E_{j}(x',0)-2i\lambda E_{j}(x',0)
+\frac{2(k^{2}-\lambda^2)}{k^{2}-\lambda^{2}}\PD_{j}E_{3}(x',0).
\end{align*}
Recalling the relation $\PD_{j}E_{3}(x',0)-\PD_{3}^+E_{j}(x',0)-i\lambda E_{j}(x',0)=0$ for $j=1,2$, we get from the previous equation that $\PD_{3}^{+}E_{j}=\PD_{3}^{-}\widetilde{E}_{j}$ on $\gamma$.

	Step 2. Prove that $\Delta \widetilde{E}_{j}+k^{2}\widetilde{E}_{j}=0$ in $\Omega$ for $j=1,2,3$. In view of Step 1, it suffices to verify that $\Delta \widetilde{E}_{j}+k^{2}\widetilde{E}_{j}=0$ in $\Omega^-$ for $j=1,2$.
		From Equation \eqref{Extension formula for Ej Theorem}, we have
	\begin{align}\label{Laplacian Ej tilde in Omega negative}
	\Delta \widetilde{E}_{j}(x)=\Delta E_{j}(x',-x_{3})+I_1+I_2+I_3,\quad x\in \Omega^{-},
	\end{align}
	where, for some fixed $j=1$ or $j=2$,
	\begin{align*}
	&I_1:=2i\lambda\Delta\lb  e^{-i\lambda x_{3}}\int\limits\limits_{0}^{-x_{3}}e^{-i\lambda s}E_{j}(x',s)\D s \rb\\
	&I_2:=\frac{2\lambda^{2}}{k^{2}-\lambda^{2}}\Delta\lb e^{-i\lambda x_{3}}\int\limits\limits_{0}^{-x_{3}}e^{-i\lambda s}\PD_{j}E_{3}(x',s)\D s\rb,\\
	&	I_3:=   -\frac{2k^{2}}{k^{2}-\lambda^{2}}\Delta\lb e^{\frac{k^{2}}{i\lambda}x_{3}}\int\limits\limits_{0}^{-x_{3}}e^{\frac{k^{2}}{i\lambda}s}\PD_{j}E_{3}(x',s)\D s\rb.
	\end{align*}
	Using $\Delta E_{j}+k^{2}E_{j}=0$ for $j=1,2,3$ in $\Omega^{+}$ and applying integration by parts, the three terms $I_j$ ($j=1,2,3$) can be calculated as follows:
		\ben
	I_{1}&=-2i\lambda \PD_{3}E_{j}(x',-x_{3})+2i\lambda e^{-i\lambda x_{3}}\PD_{3}E_{j}(x',0)+2\lambda^{2}E_{j}(x',-x_{3}) -2\lambda^{2}e^{-i\lambda x_{3}}E_{j}(x',0)\\
	& +2i\lambda^{3}e^{-i\lambda x_{3}}\int\limits\limits_{0}^{-x_{3}}e^{-i\lambda s}E_{j}(x',s)\D s-2i\lambda k^{2}e^{-i\lambda x_{3}}\int\limits\limits_{0}^{-x_{3}}e^{-i\lambda s}E_{j}(x',s)\D s\\
	&-2i\lambda^{3}e^{-i\lambda x_{3}}\int\limits\limits_{0}^{-x_{3}}e^{-i\lambda s}E_{j}(x',s)\D s
	-2\lambda^{2}E_{j}(x',-x_{3})+2i\lambda \PD_{3}E_{j}(x',-x_{3}),
	\enn
		\ben
	I_{2}&=-\frac{2\lambda^{2}}{k^{2}-\lambda^{2}}\PD_{3}\PD_{j}E_{3}(x',-x_{3}) +\frac{2\lambda^{2}}{k^{2}-\lambda^{2}}e^{-i\lambda x_{3}}\PD_{3}\PD_{j}E_{3}(x',0)  -\frac{2i\lambda^{3}}{k^{2}-\lambda^{2}}\PD_{j}E_{3}(x',-x_{3})\\
	&\ \  +\frac{2i\lambda^{3}}{k^{2}-\lambda^{2}}e^{-i\lambda x_{3}}\PD_{j}E_{3}(x',0) +\frac{2\lambda^{4}}{k^{2}-\lambda^{2}}e^{-i\lambda x_{3}}\int\limits\limits_{0}^{-x_{3}}e^{-i\lambda s}\PD_{j}E_{3}(x',s)\D s\\
	&\ \ -\frac{2\lambda^{2}k^{2}}{k^{2}-\lambda^{2}}e^{-i\lambda x_{3}}\int\limits\limits_{0}^{-x_{3}}e^{-i\lambda s}  \PD_{j}E_{3}(x',s)\D s -\frac{2\lambda^{4}}{k^{2}-\lambda^{2}}e^{-i\lambda x_{3}}\int\limits\limits_{0}^{-x_{3}}e^{-i\lambda s}\PD_{j}E_{3}(x',s)\D s\\
	&\ \ +\frac{2i\lambda^{3}}{k^{2}-\lambda^{2}}\PD_{j}E_{3}(x',-x_{3}) +\frac{2\lambda^{2}}{k^{2}-\lambda^{2}}\PD_{3}\PD_{j}E_{3}(x',-x_{3}),
	\enn
		\ben
	&I_{3}=\frac{2k^{2}}{k^{2}-\lambda^{2}}\PD_{3}\PD_{j}E_{3}(x',-x_{3})  -\frac{2k^{2}}{k^{2}-\lambda^{2}}e^{\frac{k^{2}}{i\lambda} x_{3}}\PD_{3}\PD_{j}E_{3}(x',0)+\frac{2ik^{4}}{\lambda\lb k^{2}-\lambda^{2}\rb}\PD_{j}E_{3}(x',-x_{3})\\
	&\ \ -\frac{2ik^{4}}{\lambda\lb k^{2}-\lambda^{2}\rb}e^{\frac{k^{2}}{i\lambda}x_{3}}\PD_{j}E_{3}(x',0) -\frac{2k^{6}}{\lambda^{2}\lb k^{2}-\lambda^{2}\rb}e^{\frac{k^{2}}{i\lambda}x_{3}}\int\limits\limits_{0}^{-x_{3}}e^{\frac{k^{2}}{i\lambda}s}\PD_{j}E_{3}(x',s)\D s\\
	&\ \ +\frac{2k^{4}}{k^{2}-\lambda^{2}}e^{\frac{k^{2}}{i\lambda}x_{3}}\int\limits\limits_{0}^{-x_{3}}e^{\frac{k^{2}}{i\lambda}s}\PD_{j}E_{3}(x',s)\D s  +\frac{2k^{6}}{\lambda^{2}\lb k^{2}-\lambda^{2}\rb}e^{\frac{k^{2}}{i\lambda}x_{3}}\int\limits\limits_{0}^{-x_{3}}e^{\frac{k^{2}}{i\lambda}s}\PD_{j}E_{3}(x',s)\D s\\
	&\ \  -\frac{2i k^{4}}{\lambda\lb k^{2}-\lambda^{2}\rb} \PD_{j}E_{3}(x',-x_{3})-\frac{2k^{2}}{k^{2}-\lambda^{2}}\PD_{3}\PD_{j}E_{3}(x',-x_{3}).
	\enn
		Using again the Helmholtz equation $\Delta E_{j}+k^{2}E_{j}=0$ in $\Omega^{+}$ and inserting expressions of $I_1$, $I_2$ and $I_3$ into Equation \eqref{Laplacian Ej tilde in Omega negative}, we get
	\ben
	&&\Delta \widetilde{E}_{j}(x)
	=-k^{2}\Bigg (  E_{j}(x',-x_{3}) +2i\lambda  \int\limits\limits_{0}^{-x_{3}}e^{-i\lambda (s+x_3)}E_{j}(x',s)\D s\\
	&&+\frac{2\lambda^{2}}{k^{2}-\lambda^{2}}\int\limits\limits_{0}^{-x_{3}}e^{-i\lambda (s+x_3)}\PD_{j}E_{3}(x',s)\D s
	-\frac{2k^{2}}{k^{2}-\lambda^{2}}e^{\frac{k^{2}}{i\lambda}x_{3}}\int\limits\limits_{0}^{-x_{3}}e^{\frac{k^{2}}{i\lambda}s}\PD_{j}E_{3}(x',s)\D s       \Bigg)\\
	&&+2i\lambda e^{-i\lambda x_{3}}\Big( \PD_{3}E_{j}(x',0)+i\lambda E_{j}(x',0)\Big)
	+\frac{2\lambda^{2}}{k^{2}-\lambda^{2}}e^{-i\lambda x_{3}}\PD_{j}\Big( \PD_{3}E_{3}(x',0)+i\lambda \PD_{j}E_{j}(x',0)\Big) \\
	&&-\frac{2k^{2}}{k^{2}-\lambda^{2}}e^{\frac{k^{2}}{i\lambda}x_{3}}\PD_{j}\lb \PD_{3}E_{3}(x',0)-\frac{k^{2}}{i\lambda}E_{3}(x',0)\rb.
	\enn
		This together with Equation \eqref{Extension formula for Ej Theorem} and the following boundary conditions
	\begin{align*}
	\PD_{3}E_{3}(x',0)-\frac{k^{2}}{i\lambda}E_{3}(x',0)=0, \quad \PD_{j}E_{3}(x',0)-\PD_{3}E_{j}(x',0)-i\lambda E_{j}(x',0)=0
	\end{align*}
	leads to the relation $\Delta\widetilde{E}_{j}+k^{2}\widetilde{E}_{j}=0$ in $\Omega^{-}$.

		Step 3. Prove that $\nabla\cdot \widetilde{E}=0$ in $\Omega$. It follows from Step 1 that $\tilde{E}\in C^1(\Omega)$. Hence, we only need to show the divergence-free condition in $\Omega^-$. For $x\in\Omega^{-}$, we see
			\begin{eqnarray*}
			\nabla\cdot \widetilde{E}(x)&=& \PD_{1}E_{1}(x',-x_{3})+\PD_{2}E_{2}(x',-x_{3})-\PD_{3}E_{3}(x',-x_{3}) \\ &&+2i\lambda \int\limits\limits_{0}^{-x_{3}}e^{-i\lambda (s+x_3)}\lb \PD_{1}E_{1}+\PD_{2}E_{2}\rb (x',s)\D s\\
			&& +\frac{2\lambda^{2}}{k^{2}-\lambda^{2}}\int\limits\limits_{0}^{-x_{3}}e^{-i\lambda (s+x_3)}\lb \PD_{1}^{2}E_{3}+\PD_{2}^{2}E_{3}\rb (x',s)\D s \\ &&-\frac{2k^{2}}{k^{2}-\lambda^{2}}\int\limits\limits_{0}^{-x_{3}}
			e^{\frac{k^{2}}{i\lambda}(s+x_3)}\lb \PD_{1}^{2}E_{3}+\PD_{2}^{2}E_{3}\rb(x',s)\D s\\
			&&  +\frac{2k^{4}}{\lambda^{2}}\int\limits\limits_{0}^{-x_{3}}e^{\frac{k^{2}}{i\lambda}(s+x_3)}E_{3}(x',s)\D s+\frac{2k^{2}}{i\lambda}E_{3}(x',-x_{3}).
		\end{eqnarray*}
		Now using $\nabla \cdot E=0$ and $\Delta E_{j}+k^{2}E_{j}=0$ for $1\leq j\leq 3$ in $\Omega^{+}$, we have
		\begin{align}\label{Divergence E tilde}
		\begin{aligned}
		&\nabla\cdot \widetilde{E}(x)=-2\PD_{3}E_{3}(x',-x_{3})-\underbrace{2i\lambda \int\limits\limits_{0}^{-x_{3}}e^{-i\lambda (s+x_3)}\PD_{s}E_{3}(x',s)\D s}_{J_{1}}\\
		& -\underbrace{\frac{2\lambda^{2}}{k^{2}-\lambda^{2}}\int\limits\limits_{0}^{-x_{3}}
			e^{-i\lambda (s+x_3)}\PD_{s}^{2}E_{3} (x',s)\D s}_{J_{2}}-\frac{2\lambda^{2}k^{2}}{k^{2}-\lambda^{2}}\int\limits\limits_{0}^{-x_{3}}
		e^{-i\lambda (s+x_3)}E_{3} (x',s)\D s\\
		&+\underbrace{\frac{2k^{2}}{k^{2}-\lambda^{2}}\int\limits\limits_{0}^{-x_{3}}
			e^{\frac{k^{2}}{i\lambda}(s+x_3)}\PD_{s}^{2}E_{3}(x',s)\D s}_{J_{3}} +\frac{2k^{4}}{k^{2}-\lambda^{2}}\int\limits\limits_{0}^{-x_{3}}e^{\frac{k^{2}}{i\lambda}(s+x_3)}
		E_{3}(x',s)\D s\\
		& +\frac{2k^{4}}{\lambda^{2}}\int\limits\limits_{0}^{-x_{3}}e^{\frac{k^{2}}{i\lambda}(s+x_3)}
		E_{3}(x',s)\D s+\frac{2k^{2}}{i\lambda}E_{3}(x',-x_{3}).
		\end{aligned}
		\end{align}
		Using integration by parts, we can rewrite $J_{1},J_{2}$ and $J_{3}$ as
		\ben
		\label{J2 in simplified form}
		J_{2}&=&\frac{2\lambda^{2}}{k^{2}-\lambda^{2}}\PD_{3}E_{3}(x',-x_{3}) -\frac{2\lambda^{2}}{k^{2}-\lambda^{2}}e^{-i\lambda x_{3}}\PD_{3}E_{3}(x',0)+\frac{2i\lambda^{3}}{k^{2}-\lambda^{2}}E_{3}(x',-x_{3})\\
		&&
		-\frac{2i\lambda^{3}}{k^{2}-\lambda^{2}}e^{-i\lambda x_{3}}E_{3}(x',0)
		-\frac{2\lambda^{4}}{k^{2}-\lambda^{2}}e^{-i\lambda x_{3}}\int\limits\limits_{0}^{-x_{3}}e^{-i\lambda s}E_{3}(x',s)\D s\\
		&& +\frac{2\lambda^{2}k^{2}}{k^{2}-\lambda^{2}}e^{-i\lambda x_{3}}\int\limits\limits_{0}^{-x_{3}}e^{-i\lambda s}E_{3} (x',s)\D s,\\
		\label{J1 in simplified form}
		J_{1}&=&2i\lambda E_{3}(x',-x_{3})-2i\lambda e^{-i\lambda x_{3}}E_{3}(x',0)-2\lambda^{2}e^{-i\lambda x_{3}}\int\limits\limits_{0}^{-x_{3}}e^{-i\lambda s}E_{3}(x',s)\D s,\\
		J_{3}&=&\frac{2k^{2}}{k^{2}-\lambda^{2}}\PD_{3}E_{3}(x',-x_{3}) -\frac{2k^{2}}{k^{2}-\lambda^{2}}e^{\frac{k^{2}}{i\lambda}x_{3}}\PD_{3}E_{3}(x',0)+\frac{2ik^{4}}{\lambda \lb k^{2}-\lambda^{2}\rb}E_{3}(x',-x_{3})\\
		&& -\frac{2ik^{4}}{\lambda \lb k^{2}-\lambda^{2}\rb}e^{\frac{k^{2}}{i\lambda}x_{3}}E_{3}(x',0)
		-\frac{2k^{6}}{\lambda^{2}\lb k^{2}-\lambda^{2}\rb}e^{\frac{k^{2}}{i\lambda}x_{3}}\int\limits\limits_{0}^{-x_{3}}e^{\frac{k^{2}}{i\lambda} s}E_{3}(x',s)\D s.
		\enn
		Inserting them into Equation \eqref{Divergence E tilde}, applying the integration by parts and rearranging terms, we get
		\begin{align*}
		\begin{aligned}
		\nabla\cdot \widetilde{E}(x)&=2e^{-i\lambda x_{3}}\Bigg[\lb i\lambda +\frac{i\lambda^{3}}{k^{2}-\lambda^{2}}\rb E_{3}(x',0)+\frac{\lambda^{2}}{k^{2}-\lambda^{2}} \PD_{3}E_{3}(x',0)\Bigg]\\
		& \ \  +\frac{2k^{2}}{k^{2}-\lambda^{2}}e^{\frac{k^{2}}{i\lambda}x_{3}}\lb -\PD_{3}E_{3}(x',0)+\frac{k^{2}}{i\lambda} E_{3}(x',0)\rb.
		\end{aligned}
		\end{align*}
		Recalling
			$\PD_{3}E_{3}(x',0)-\frac{k^{2}}{i\lambda}E_{3}(x',0)=0$,  we finally get $\nabla\cdot\widetilde{E}=0$ in $\Omega^{-}$.
		
		By far we have proved that the function $\widetilde{E}$ with components given by Equations \eqref{Extension formula}, \eqref{E3 tilde extension Theorem} and  \eqref{Extension formula for Ej Theorem}  is the extension of the solution $E$ of the Maxwell's equations. \hfill$\Box$.
			
		\section{Applications of the Reflection Principle}
		The main purpose of this section is to prove the uniqueness result for recovering convex polyhedral scatterers of impedance type, which was stated in Theorem \ref{Main theorem scattering}  when the incident fields are given by either \eqref{Point source waves}  or \eqref{Herglotz wave fields}. This part also gives a new proof for electromagnetic plane waves.

		Assuming two of such different scatterers generate identical far-field patterns,
		we shall prove via reflection principle that the scattered electric field could be analytically extended into the whole space, which is impossible.  Similar ideas were employed in \cite{FI,Ik2000, Elschner_Hu_Elastic} for proving uniqueness in inverse conductivity and elastic scattering problems. Later we shall remark why our approach cannot be applied to non-convex polyhedral scatterers and compare our arguments with the uniqueness proof of \cite{Cheng2003} in the Helmholtz case. The Corollaries below follow straightforwardly from  Theorem \ref{Main theorem reflection}.
		\begin{corollary}\label{cor3.1}
			Let $(E, H)$ be a solution to the Maxwell's equations (\ref{Main equation}) in $x_3>0$ fulfilling the impedance boundary condition (\ref{Boundary condition}) on $\Pi=\{x\in \mathbb{R}^3: x_3=0\}$. Then $(E, H)$ can be extended from the upper half-space $x_3\geq0$ to the whole space.
			Moreover, the extended electric field $\widetilde{E}$ is given by
			\ben
			\widetilde{E}(x)=\left\{\begin{array}{lll}
				E(x),\ \ &&\mbox{if}\ x_3\geq 0\\
				\mathcal{D}E(x',-x_{3}),\ \ &&\mbox{if}\ x_3<0,
			\end{array}\right.
			\enn
			where $\mathcal{D}$ is the operator defined in Theorem \ref{Main theorem reflection}.
		\end{corollary}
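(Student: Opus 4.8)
The plan is to recognize Corollary \ref{cor3.1} as the special case of Theorem \ref{Main theorem reflection} obtained by taking the symmetric domain $\Omega$ to be all of $\Rb^{3}$. Concretely, I would set $\Omega=\Rb^{3}$, which is open, connected, and symmetric with respect to the plane $\Pi=\{x\in\Rb^{3}:x_{3}=0\}$; then $\Omega^{+}=\{x_{3}>0\}$ is the upper half-space, $\Omega^{-}=\{x_{3}<0\}$ is the lower half-space, and $\gamma=\Omega\cap\Pi=\Pi$ is the entire reflecting plane.

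The first thing to check is that this choice of $\Omega$ satisfies the geometric hypothesis imposed in Theorem \ref{Main theorem reflection}, namely that the orthogonal projection onto $\Pi$ of any line segment with endpoints in $\Omega$ is contained in $\gamma$. Since here $\gamma=\Pi$ coincides with the whole plane, this requirement is trivially met: the projection of any segment necessarily lies in $\Pi=\gamma$, and every segment with endpoints in $\Rb^{3}$ lies in $\Rb^{3}=\Omega$. Thus all hypotheses of the reflection theorem are in force for this degenerate but admissible configuration.

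With these identifications, I would apply Theorem \ref{Main theorem reflection} directly. The assumption that $(E,H)$ solves \eqref{Main equation} in $x_{3}>0$ with the impedance condition \eqref{Boundary condition} on $\Pi$ is precisely the hypothesis of the theorem with $\Omega^{+}=\{x_{3}>0\}$ and $\gamma=\Pi$. The theorem then produces an analytic extension $\widetilde{E}$ to $\Omega^{-}=\{x_{3}<0\}$ solving \eqref{Main equation}, given by the formula \eqref{Extension formula}; since $\Omega^{+}\cup\gamma=\{x_{3}\geq 0\}$, this two-branch formula reads exactly as the expression in the statement, with $\mathcal{D}$ the operator defined through \eqref{E3 tilde extension Theorem}--\eqref{Extension formula for Ej Theorem}. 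Finally, the magnetic field is extended by $\widetilde{H}:=\frac{1}{ik}\nabla\times\widetilde{E}$, which agrees with $H$ in $x_{3}>0$ on account of \eqref{Main equation} and yields a global solution $(\widetilde{E},\widetilde{H})$ of Maxwell's equations on $\Rb^{3}$.

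There is no essential obstacle here: the corollary carries no new analytic content beyond Theorem \ref{Main theorem reflection}, which is why the text records that it follows straightforwardly. The only point deserving (minimal) attention is the observation that the geometric admissibility condition degenerates to a triviality precisely when $\gamma$ is the full plane, so that no genuine restriction on line segments is imposed; all the substantive computation has already been discharged in the proof of the theorem.
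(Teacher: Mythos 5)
Your proposal is correct and matches the paper's intent exactly: the paper offers no separate argument for Corollary \ref{cor3.1}, stating only that it follows straightforwardly from Theorem \ref{Main theorem reflection}, which is precisely your specialization $\Omega=\Rb^3$, $\gamma=\Pi$, with the geometric segment condition holding trivially. Your explicit verification of that condition and the extension $\widetilde{H}=\frac{1}{ik}\nabla\times\widetilde{E}$ are the right (if routine) details to record.
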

		\begin{corollary}\label{cor3.2}
		Let $\Omega=\Omega^+\cup\gamma\cup\Omega^-$ be the domain defined in Theorem \ref{Main theorem reflection}. Given a subset $D\subset \Omega^-$, suppose that $(E, H)$ is a solution to the Maxwell's equations (\ref{Main equation}) in $\Omega\backslash \overline{D}$ fulfilling the impedance boundary condition (\ref{Boundary condition}) on $\gamma$.
		Then $(E, H)$ can be analytically extended onto $\overline{D}$.
	\end{corollary}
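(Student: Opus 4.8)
The plan is to obtain Corollary~\ref{cor3.2} as an almost immediate consequence of the reflection principle in Theorem~\ref{Main theorem reflection}, combined with real-analytic unique continuation. Since $D\subset\Omega^{-}$, the open upper part $\Omega^{+}$ is disjoint from $\overline{D}$, so the restriction of $(E,H)$ to $\Omega^{+}$ is a genuine solution of the Maxwell's equations \eqref{Main equation} in $\Omega^{+}$ that satisfies the impedance boundary condition \eqref{Boundary condition} on $\gamma$. First I would apply Theorem~\ref{Main theorem reflection} to this restriction: it produces a field $\widetilde{E}$, defined on all of $\Omega$ through the explicit formula \eqref{Extension formula}, which is real-analytic and solves the Maxwell's equations (equivalently the divergence-free Helmholtz system \eqref{Divergence free and Helmholtz equation}) throughout $\Omega$. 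In particular $\widetilde{E}$ is regular on $\overline{D}$, and the companion magnetic field is recovered by $\widetilde{H}:=\frac{1}{ik}\nabla\times\widetilde{E}$.

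It then remains to check that $\widetilde{E}$ genuinely extends the given $E$, i.e.\ that $\widetilde{E}=E$ on $\Omega^{-}\setminus\overline{D}$. By construction \eqref{Extension formula} we already have $\widetilde{E}=E$ on $\Omega^{+}\cup\gamma$. Both $E$ and $\widetilde{E}$ are real-analytic solutions of \eqref{Divergence free and Helmholtz equation} on the open set $\Omega\setminus\overline{D}$ (for $E$ this is interior elliptic regularity with constant coefficients, and for $\widetilde{E}$ it is part of the conclusion of Theorem~\ref{Main theorem reflection}), and they coincide on the nonempty open subset $\Omega^{+}$. Hence I would invoke the identity theorem for real-analytic functions on the connected component of $\Omega\setminus\overline{D}$ that contains $\Omega^{+}$ to conclude $\widetilde{E}=E$ there. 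When $\Omega\setminus\overline{D}$ is connected --- which is the situation in the intended application, where $D$ is convex so that its complement is connected --- this yields $\widetilde{E}=E$ on all of $\Omega\setminus\overline{D}$. Consequently the analytic field $\widetilde{E}$ agrees with $E$ up to $\partial D$ from the exterior and is analytic across $\partial D$, which is precisely the claimed analytic extension of $(E,H)$ onto $\overline{D}$.

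The argument is short because the hard analytic work has already been carried out in Theorem~\ref{Main theorem reflection}; note in particular that I do not need to re-examine the Cauchy data of the reflected field on $\gamma$, since agreement of the two analytic solutions on the open set $\Omega^{+}$ already forces agreement elsewhere by unique continuation. The only genuinely delicate points are (i) confirming that the hypotheses of the reflection theorem are really met by the restriction $E|_{\Omega^{+}}$, which rests on the separation of $D$ from the plane $\Pi$ so that $E$ solves \eqref{Main equation} on all of $\Omega^{+}$ and satisfies \eqref{Boundary condition} on the whole of $\gamma$, and (ii) the connectedness of $\Omega\setminus\overline{D}$, which is what allows the local identity $\widetilde{E}=E$ on $\Omega^{+}$ to propagate to a full exterior neighbourhood of $\partial D$ and thereby furnish the extension onto $\overline{D}$. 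I expect (ii) to be the main obstacle in any fully general formulation; in the convex setting relevant to Theorem~\ref{Main theorem scattering} it is automatic, and the corollary follows directly.
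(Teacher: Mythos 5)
Your proposal is correct and takes essentially the same route as the paper, which offers no detailed argument beyond the remark that the corollary ``follows straightforwardly'' from Theorem~\ref{Main theorem reflection}: restrict $(E,H)$ to $\Omega^{+}$, apply the reflection principle to obtain a field analytic on all of $\Omega$, and identify it with $E$ on $\Omega^{-}\setminus\overline{D}$ by the identity theorem for real-analytic solutions. Your explicit attention to the connectedness of $\Omega\setminus\overline{D}$ is a caveat the paper leaves implicit; it is harmless in the intended application, where $D$ is the segment $OO_{1}$, whose complement in $\Omega$ is connected.
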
	
	The above results will be used in the proof of
Theorem \ref{Main theorem scattering} to be carried out below.	
	\subsection{Proof of Theorem \ref{Main theorem scattering} for electromagnetic Herglotz waves and point source waves}\label{Main theorem}

First, we proceed with the same arguments for electromagnetic plane waves. Assume that there are two different convex polyhedrons $D_1$ and $D_2$ which generate the same electric far-field pattern.
We assume without loss of generality that  there exists a vertex $O$ of $\PD D_{1}$ and a neighborhood $V_{O}$ of $O$ such that $V_{O}\cap \overline{D}_{2}=\emptyset$; see Figure \ref{f1}. Denote by $\Lambda_j\subset \partial D_1$ $(j=1,2\cdots m)$ the $m\geq 3$ convex polygonal faces of $\partial D_{1}$ whose closure meet at $O$ and by $\widetilde{\Pi}_j\supseteq \Lambda_j$ their maximum analytical extension in $\Rb^3\backslash\overline{D}_2$.
	Then we get
\[\nu\times \lb \nabla\times E_{2}\rb+i\lambda\nu\times\lb\nu\times E_{2}\rb=0\quad \mbox{on}\quad \widetilde{\Pi}_j,\quad j=1,2,\cdots,m
\]
due to the analyticity of $E_2$ in the exterior of $\overline{D}_2$.
\begin{figure}[htbp]
	\centering
	\includegraphics[width=5.7cm,height=4cm]{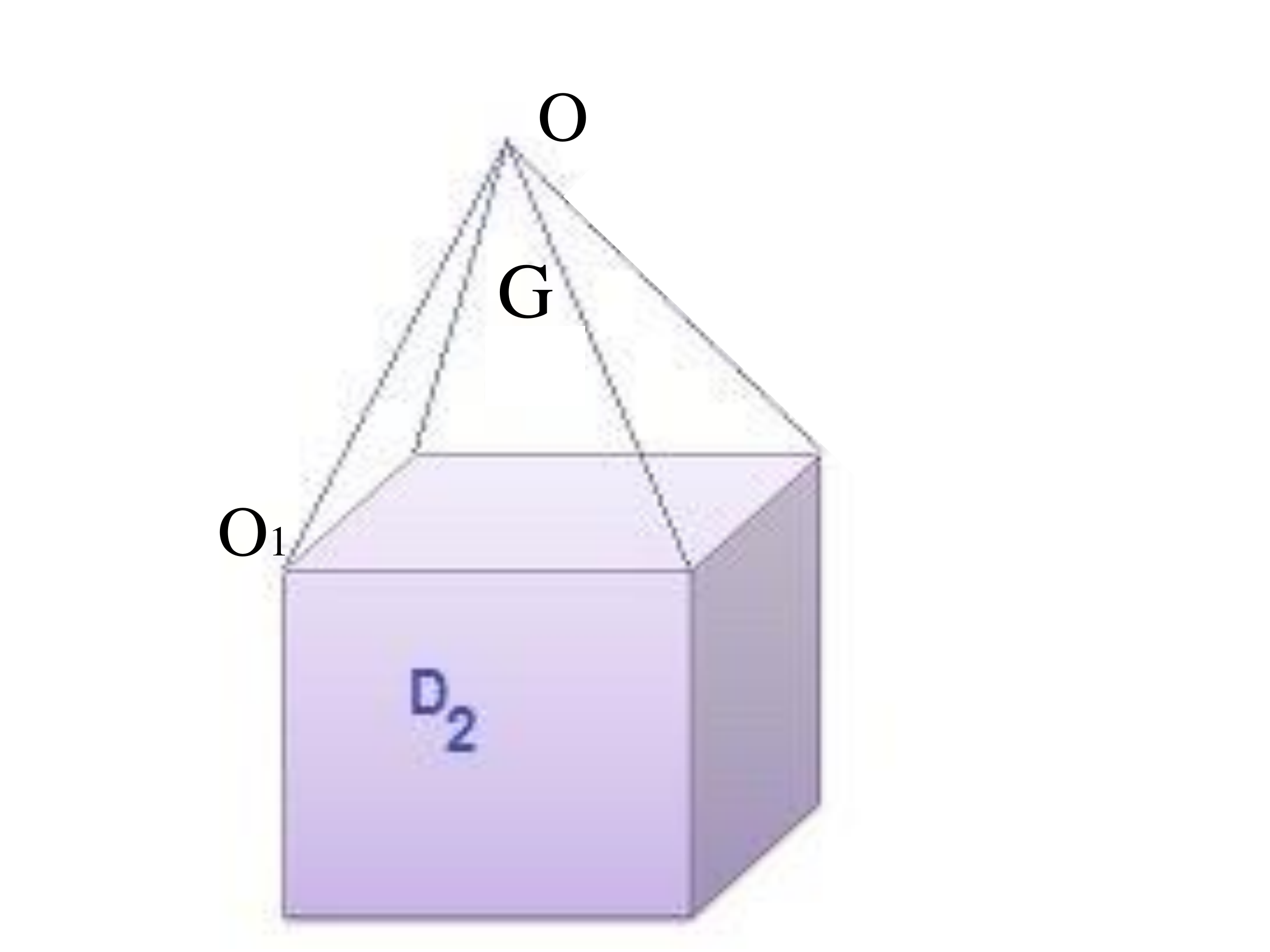}
	\includegraphics[width=6cm,height=4.5cm]{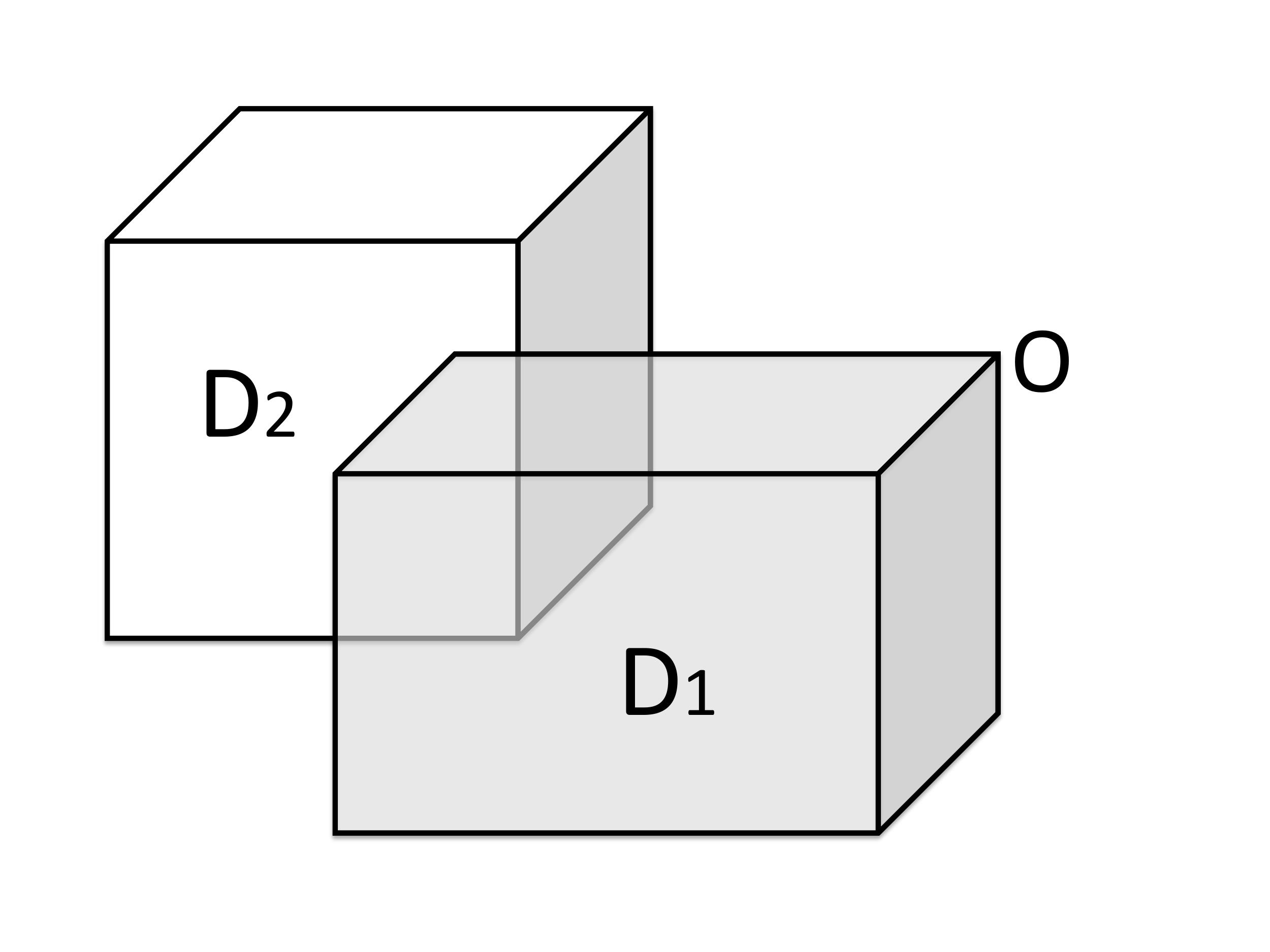}
	\caption{Illustration of two different convex polyhedral scatterers. Left: $D_2$ is a cube and $D_1$ is the interior of  $\overline{D_2\cup G}$, where $G$ denotes the gap domain between $D_1$ and $D_2$. There are four faces of $D_1$ around the vertex $O$, none of them extends to an entire plane in $\mathbb{R}^3\backslash\overline{D}_2$. Right: $D_1$ and $D_2$ are both cubes. There are three faces of $D_1$ around the vertex $O$, only one of them can be extended to an entire plane in $\mathbb{R}^3\backslash\overline{D}_2$.
	}
	\label{f1}
\end{figure}

	\textbf{Proof of Theorem \ref{Main theorem scattering} for electromagnetic Herglotz waves}.
In this case, $E_2$ satisfies the Maxwell's equations in $\Rb^3\backslash\overline{D}_2$. We consider two cases.

Case (i): One of $\widetilde{\Pi}_{j}$ coincides with some hyper-plane $\Pi$ in $\Rb^{3}\backslash{\overline{D_{2}}}$ (see Figure \ref{f1},right).
Since $D_{2}$ is convex,  it must lie completely on one side of the plane $\Pi$. By Corollary \ref{cor3.1}, the electric field $E_{2}$ can be analytically extended to $\Rb^{3}$ as a solution to the Maxwell's equations. This implies that $E_{2}^{sc}$ is an entire radiating solution to the Maxwell's equations. Consequently, we get $E_{2}^{sc}\equiv0$ and thus the total field $E_{2}=E^{in}$ satisfies the impedance boundary condition on $\PD D_{2}$.

Case (ii): None of $\widetilde{\Pi}_{j}$ coincides with an entire hyper-plane in $\Rb^{3}\backslash{\overline{D_{2}}}$ (see Figure \ref{f1}, left). Denote by $\Pi_{j}\supset \widetilde{\Pi}_j$ the hyper-plane in $\Rb^{3}$ containing $\Lambda_{j}$.
We shall prove via reflection principle that $E_2$ satisfies the impedance boundary condition on each $\Pi_j$, which again leads to the relation  $E_{2}=E^{in}$ by repeating the same arguments in case (i).
Without loss of generality we take $j=1$ and consider the plane $\Pi_1\supset\widetilde{\Pi}_1\supset\Lambda_1$. Recall that $\Lambda_1\subset \partial D_1$ is a convex polygonal face and that the total field $E_2$ is analytic near the corner $O$ of $\partial\Lambda_1$. It suffices to prove that $E_2$ is analytic on $\partial \Lambda_1$.
Let $O_1\in\partial\Lambda_1$ be a neighboring corner of $O$, which is also a vertex of $D_2$. By the convexity of $D_2$, there exists at least one face $\Lambda_{j'}$ with $j'\neq 1$ such that the finite line segment $OO_1$ lies completely on one side of the hyper-plane $\Pi_{j'}$ and the projection of $OO_1$ onto $\Pi_{j'}$, which we denote by $L$,  is a subset of $\widetilde{\Pi}_{j'}(\subset\Pi_{j'})$. We refer to Figure \ref{f3} for an illustration of the proof in two dimensions.
Since $D_2$ does not intersect with $\widetilde{\Pi}_{j'}$,  one can always find a symmetric domain $\Omega\subset\Rb^3\backslash \overline{D}_2$ with respect to $\Pi_j$ such that $OO_1\subset \overline{\Omega}$ and $L\subset (\Omega\cap \widetilde{\Pi}_{j'})$.
Recall that $E_2$ fulfills the impedance boundary condition on $\widetilde{\Pi}_{j'}$. Now applying Corollary \ref{cor3.2} with $D=OO_1$ to $E_2$, we find that $E_2$ must be analytic on $\overline{OO}_1$, and in particular, $E_2$ is analytic near $O_1$. Here we have used the fact that the reflection of $\overline{OO_1}$ with respect to $\Pi_{j'}$ lies completely in $\Rb^3\backslash\overline{D}_2$ and $E_2$ is real-analytic in $\Rb^3\backslash\overline{D}_2$.
Analogously, one can prove the analyticity of $E_2$ at another neighboring corner point $O_2$ to $O\in\partial\Lambda_1$ and also the analyticity on the line segment $OO_2\subset \partial \Lambda_1$.  Applying the same arguments to $O_1$ and $O_2$ in place of $O$, we can conclude that $E_2$ is analytic on the closure of $\Lambda_1$. This implies that $E_2$ satisfies the impedance boundary condition on the entire plane $\Pi_1\supset\widetilde{\Pi}_1$ and thus $E_2=E^{in}$ in $\Rb^3$.
	\begin{figure}[htbp]
	\centering
	\includegraphics[width=5.5cm,height=4cm]{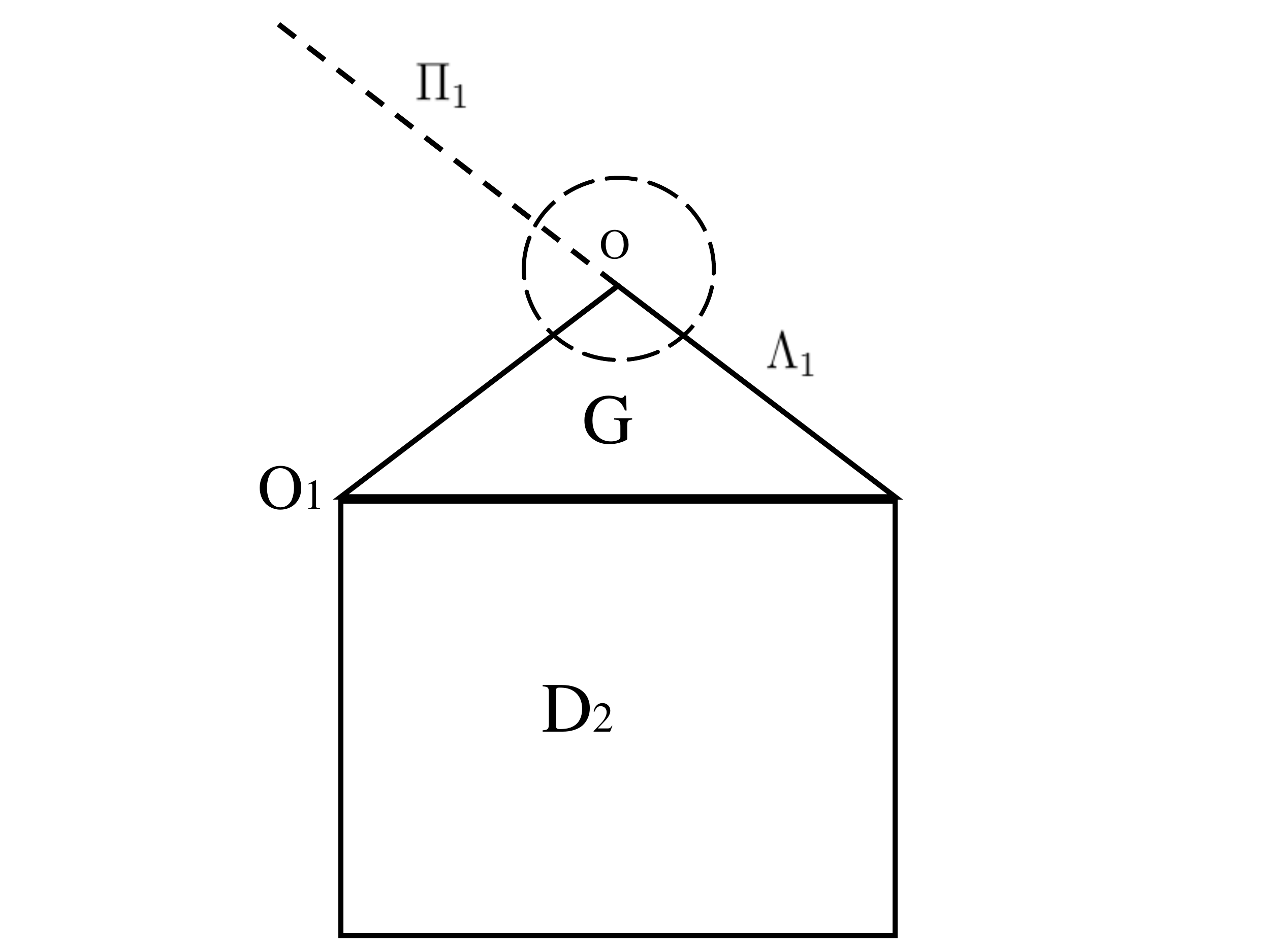}
	\caption{Illustration of two different convex polygonal scatterers: $D_2$ is a square and $D_1$ is the interior of $\overline{D_2\cup G}$, where $G$ denotes the gap domain between $D_1$ and $D_2$. There are two sides of $D_1$ around the corner $O$, both of them cannot be extended to a straight line in $\mathbb{R}^2\backslash\overline{D}_2$. }
	\label{f3}
\end{figure}
To continue the proof, we recall from cases (i) and (ii) that $E^{sc}\equiv 0$ and
the incident field $E^{in}$ satisfies the following boundary value problem in $D_{2}$
\ben\label{Impedance condition for incident field}
&\nabla\times \lb \nabla\times E^{in}\rb -k^{2}E^{in}=0,\ \mbox{in}\ D_{2},\\
&\nu\times\lb \nabla\times E^{in}\rb +i\lambda \nu\times\lb \nu\times E^{in}\rb=0, \ \mbox{on}\ \PD D_{2}.
\enn
Taking the inner product with $\overline{E^{in}}$, integrating over $D_{2}$ and
using the integration by parts, we obtain
	\ben
\int\limits\limits_{D_{2}}\lvert \nabla\times E^{in}\rvert^{2}-k^{2}\lvert E^{in}\rvert^{2}\,\D x+i\lambda \int\limits\limits_{\PD D_{2}}\lvert \nu\times E^{in}\rvert^{2}\D s=0.
\enn
Taking the imaginary parts in the above integral
gives $\nu\times E^{in}=0$ on $\partial D_2$, which together with the impedance boundary condition implies further that $\nu\times (\nabla\times E^{in})=0$ on $\partial D_2$.
	Finally, using the analogue of Holmgren's theorem for Maxwell's equations (see e.g., \cite[Theorem 6.5 ]{Colton_Kress_Book}), we get $E^{in}\equiv 0$, which is a contradiction. Therefore we have $D_{1}=D_{2}$. This proves Theorem \ref{Main theorem scattering} for incident fields given by electromagnetic Herglotz waves. \hfill $\Box$

		\textbf{Proof of Theorem \ref{Main theorem scattering} for electromagnetic point source waves.}
	Suppose that $E^{in}=E^{in}(x,y)$ is an electric point source incited by a magnetic dipole located at $y\in \Rb^3\backslash\overline{D}_j$ ($j=1,2$).
	By the previous proof for electromagnetic Herglotz waves, one can always find $m\geq 3$ entire planes $\Pi_j$ ($j=1,2,\cdots m$) meeting at the vertex $O\in\partial D_1$ such that $E_2$ fulfills the impedance boundary condition on $\Pi_j$, $j=1,2,\cdots,m$. Further, for each $j=1,2,\cdots,m$, $\Pi_j$ does not pass through the source position $y $, and
	the convex polyhedron $D_2$ lies in one side of $\Pi_j$. Repeating the same arguments in case (ii) of the proof for plane waves, one can prove that $E_2$ is analytic on the closure of each face of $D_2$. In particular, $E_2$ fulfills the impedance boundary condition on the entire plane $\Pi_{\Lambda}$ which extends a face $\Lambda$ of $\partial D_2$. By the arbitrariness of $\Lambda$,  we can always find a face $\Lambda'\subset\partial D_2$
	such that the reflection of $y$ with respect to $\Pi_{\Lambda'}$ belongs to $\Rb^3\backslash\overline{D}_2$. By the reflection principle (see Corollary \ref{cor3.2}), $E_2$ must be analytic at $y$, which is a contradiction to the singularity of $E_2$ at the source position.
	\hfill $\Box$
		\subsection{Remarks and Corollaries}\label{sec:remarks}
	Below we present several remarks concerning the  proof of Theorem \ref{Main theorem scattering}.
	\begin{remark}
		(i) Using the reflection principle for the Helmholtz equation (see Theorem \ref{Main theorem reflection} or \cite{Cheng2003}), the idea in the proof of Theorem \ref{Main theorem scattering} can be used to prove unique determination of a convex polyhedral or polygonal scatterer of acoustically impedance-type with a single incoming wave; see Figure \ref{f3} for an illustration of the uniqueness proof in two dimensions. This improves the result of \cite{Cheng2003} where two incident directions were used in 2D.
			(ii) For non-convex polyhedral scatterers, one cannot find a vertex $O$ around which the total field is analytic under the assumption \eqref{Equality of far-field patterns}. Hence, our uniqueness proof to inverse electromagnetic scattering does not apply to non-convex polyhedrons of impedance type. However, this might be possible if one can establish a reflection principe by removing the geometrical assumption of $\Omega$ made in Theorem \ref{Main theorem reflection} (see e.g. \cite{Elschner_Hu_Elastic} in the elastic case).
		\end{remark}
	As a consequence of the proof of Theorem \ref{Main theorem scattering} we have the following corollaries. In particular,
	the \textquoteleft singularity' of $E^{sc}$ at vertices motivates us to design a data-driven scheme (see Section \ref{numeric} below)  to locate
	all vertices of $D$ so that the position and shape of $D$ can be recovered from a single measurement data.
		\begin{corollary}\label{singular}
		Let $D\subset\Rb^{3}$ be a convex polyhedron and let $E=E^{in}+E^{sc}$ be the solution to Equations \eqref{Maxwell equation in exterior}-\eqref{Boundary condition scattering}. Then  $E$ cannot be analytically extended from $\Rb^3\backslash\overline{D}$ to the interior of $D$ across a vertex of $\PD D$, or equivalently, $E$ cannot be analytic on the vertices of $D$.
	\end{corollary}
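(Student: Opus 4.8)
The plan is to argue by contradiction, reusing the mechanism behind the proof of Theorem \ref{Main theorem scattering}. Suppose that $E=E^{in}+E^{sc}$ admits an analytic extension across a vertex $O$ of $\PD D$; since the incident field is analytic in a neighborhood of $O$ (the source position, if any, lies away from $\overline{D}$), this is equivalent to $E^{sc}$ extending analytically across $O$. Let $\Lambda_1,\dots,\Lambda_m$ ($m\geq 3$) be the faces of $\PD D$ meeting at $O$, lying in supporting planes $\Pi_1,\dots,\Pi_m$ whose outward normals span $\Rb^3$; by convexity $D$ lies entirely in the closed interior half-space bounded by each $\Pi_j$, and $\Pi_j\cap\overline{D}=\overline{\Lambda_j}$.

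First I would upgrade the impedance boundary condition from the face $\Lambda_j$ to the entire plane $\Pi_j$. Write $\mathcal{B}[E]:=\nu\times(\nabla\times E)+i\lambda\,\nu\times(\nu\times E)$ for the boundary operator associated with $\Pi_j$. Its restriction to $\Pi_j$ is a real-analytic vector field on the open subset of $\Pi_j$ where $E$ is analytic, and by hypothesis this subset contains both a disk $U$ around $O$ (analyticity across the vertex) and the whole exterior part $W:=\Pi_j\setminus\overline{\Lambda_j}$ (exterior analyticity of $E$, since $\Pi_j\cap\overline{D}=\overline{\Lambda_j}$). These two pieces overlap in the exterior angular sector at $O$ and, because $\Lambda_j$ is a convex polygon, $U\cup W$ is connected; moreover $\mathcal{B}[E]$ vanishes on the open face-sector $\Lambda_j\cap U$. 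The identity theorem for real-analytic functions then forces $\mathcal{B}[E]\equiv 0$ on $U\cup W$, hence on all of $\Pi_j$ after taking closures. The decisive role of the assumed analyticity across $O$ is precisely to make $\mathcal{B}[E]$ analytic across the edges of $\Lambda_j$ emanating from $O$, thereby bridging the face, where the boundary condition genuinely holds, to the exterior part of the plane. This is the step I expect to be the crux, and it is exactly what lets me bypass the face-by-face propagation used in Case (ii) of Theorem \ref{Main theorem scattering}, which is unavailable here since the reflecting domains would have to avoid $\overline{D}$ itself.

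With the impedance condition now available on the full plane $\Pi_j$ and $D$ contained in one closed half-space, Corollary \ref{cor3.1} applies and extends $E$ from the exterior half-space across $\Pi_j$ to all of $\Rb^3$ as a solution of Maxwell's equations. For plane-wave and Herglotz incidence $E^{in}$ is entire, so $E^{sc}=E-E^{in}$ becomes an entire radiating solution and hence $E^{sc}\equiv 0$. Consequently $E=E^{in}$ in $\Rb^3\setminus\overline{D}$ and $E^{in}$ satisfies the impedance boundary condition on $\PD D$; integrating $\nabla\times(\nabla\times E^{in})-k^2E^{in}=0$ against $\overline{E^{in}}$ over $D$ and taking the imaginary part of the resulting energy identity gives $\nu\times E^{in}=0$ and then $\nu\times(\nabla\times E^{in})=0$ on $\PD D$, whence the Holmgren-type theorem \cite[Theorem 6.5]{Colton_Kress_Book} yields $E^{in}\equiv 0$, contradicting the non-vanishing of the incident field.

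For point-source incidence the same scheme works once the reflecting plane is chosen so that the dipole location $y$ lies on the side not used for the extension (equivalently, so that the reflection of $y$ across $\Pi_j$ stays in $\Rb^3\setminus\overline{D}$); this is possible because $y$ is a single point while at least three planes $\Pi_j$ are available, exactly as in the point-source argument for Theorem \ref{Main theorem scattering}. One again concludes $E^{in}\equiv 0$, now by unique continuation, which is impossible for a field singular at $y$. In every case the contradiction shows that $E$ cannot be analytic at any vertex of $D$, which is the assertion of the Corollary.
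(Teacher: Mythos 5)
Your opening move is correct, and it is the right observation for this corollary: since $\Pi_j$ is a supporting plane of the convex polyhedron, $\Pi_j\cap\overline{D}=\overline{\Lambda_j}$, so $\mathcal{B}[E]$ is real-analytic on the connected planar open set $U\cup W$, and the identity theorem propagates the impedance condition from $\Lambda_j\cap U$ to all of $U\cup W$. The proof breaks, however, exactly where you wave at it with ``hence on all of $\Pi_j$ after taking closures.'' Combining with the boundary condition given on the open face, what you control is the impedance condition on $\Pi_j\setminus\bigl(\PD\Lambda_j\setminus U\bigr)$, i.e.\ everywhere except the edges of $\Lambda_j$ away from $O$. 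On that edge set neither the boundary condition nor even continuity of $\nabla\times E$ up to $\Pi_j$ is known: the exterior field may have edge singularities, and ruling out such singularities is part of what this corollary is supposed to prove, not something you may assume. Hence Corollary \ref{cor3.1}, whose hypothesis is the impedance condition on the \emph{entire} plane, does not apply. Applying Theorem \ref{Main theorem reflection} honestly with the largest admissible symmetric domain ($\Rb^3$ minus the vertical walls over the missing edge set) extends $E$ only off those walls; the portion of the walls inside $\overline{D}$ survives as a possible singular set of $E^{sc}$, and it is a genuine two-dimensional set whenever a dihedral angle along $\PD\Lambda_j$ is at least $\pi/2$ (for a cube it contains the adjacent faces; for a dodecahedron it cuts into the interior of $D$). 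So ``$E^{sc}$ is an entire radiating solution, hence $E^{sc}\equiv 0$'' is not established, and the contradiction collapses.

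This is precisely the gap the paper fills with the Case (ii) propagation in the proof of Theorem \ref{Main theorem scattering}: Corollary \ref{cor3.2} is applied to the edges emanating from $O$, reflected across the plane of \emph{another} face meeting at $O$, to show first that $E$ is analytic along those edges and at the neighboring vertices, hence on the closed faces; only then is the boundary condition available on genuinely entire planes and the full-plane reflection legitimate. Your reason for discarding this mechanism---``the reflecting domains would have to avoid $\overline{D}$ itself''---is not correct: Corollary \ref{cor3.2} is formulated exactly to permit an excised set $D'\subset\Omega^-$; one may take $\Omega$ to be a thin cylinder over a disk in $\Pi_{j'}\cap V_O$ and $D'=\overline{D}\cap\Omega\setminus V_O$, whose reflection lies in $\Rb^3\setminus\overline{D}$ because $\Pi_{j'}$ supports $D$. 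A second, independent gap sits in your point-source paragraph: you need a face at $O$ whose plane has $y$ strictly on the $D$-side, but all $m\geq 3$ planes through $O$ can have $y$ on their exterior side (take $D=[0,1]^3$, $O$ the origin vertex, $y=(-1,-1,-1)$); the paper's argument avoids this by first propagating the boundary condition to the planes of \emph{all} faces of $D$ and only then selecting a suitable face for the final reflection.
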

	
	\begin{corollary}\label{PEC} Let $D$ be a perfectly conduction polyhedron such that $\Rb^3\backslash\overline{D}$ is connected.
		Suppose that $E=E^{in}+E^{sc}$ is a solution to Equations \eqref{Maxwell equation in exterior}-\eqref{Radiation condition} with the boundary condition $\nu\times E=0$ on $\partial D$. If $E^{in}$ is an incident Herglotz wave, we suppose additionally that $k^2$ is not the  eigenvalue of the operator $\nabla\times\nabla\times$ over $D$ with the boundary condition of vanishing tangential components on $\partial D$.
		Then $\partial D$ can be uniquely determined by a single electric far-field pattern $E^\infty$ over all observation directions. Moreover, $E$ cannot be analytically extended from $\Rb^3\backslash\overline{D}$ to the interior of $D$ across a vertex of $\PD D$.
	\end{corollary}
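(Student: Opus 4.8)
The plan is to obtain Corollary~\ref{PEC} as the $\lambda\to\infty$ (perfectly conducting) counterpart of Theorem~\ref{Main theorem scattering} and Corollary~\ref{singular}, rerunning the uniqueness argument almost verbatim but with the impedance reflection principle replaced by its Dirichlet limit. Recall from the discussion following Theorem~\ref{Main theorem reflection} that as $\lambda\to\infty$ the operator $\mathcal{D}$ degenerates to the classical point-to-point formula $\widetilde{E}(x)=-R_\Pi E(R_\Pi x)$, which extends a solution of \eqref{Main equation} satisfying $\nu\times E=0$ on $\gamma$ and, crucially, is valid for \emph{any} domain symmetric about $\Pi$ without the geometric hypothesis on $\Omega$. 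Consequently Corollaries~\ref{cor3.1} and~\ref{cor3.2} hold with $\mathcal{D}E(x',-x_3)$ replaced by $-R_\Pi E(R_\Pi x)$; I would record these perfectly conducting versions first and then use them only as ready-made tools.

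For the uniqueness part I would argue by contradiction exactly as in \S\ref{Main theorem}. Supposing $D_1\neq D_2$ produce the same $E^\infty$, Rellich's lemma gives $E_1=E_2$ on the unbounded component of $\Rb^3\setminus\overline{D_1\cup D_2}$; I then locate a vertex $O\in\partial D_1$ with a neighbourhood disjoint from $\overline{D}_2$ together with the $m\ge 3$ faces $\Lambda_j$ meeting at $O$, on whose analytic extensions $\widetilde\Pi_j$ the field $E_2$ inherits $\nu\times E_2=0$. Running the Case~(i)/(ii) propagation argument of the Herglotz proof, but invoking the point-to-point reflection through Corollary~\ref{cor3.2}, I would conclude that $E_2$ satisfies $\nu\times E_2=0$ on each entire plane $\Pi_j$, whence $E_2^{sc}$ is an entire radiating solution and therefore $E_2^{sc}\equiv 0$; thus $E=E^{in}$ satisfies $\nu\times E^{in}=0$ on $\partial D_2$.

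The only place where the perfectly conducting case departs from the impedance one is the final contradiction, handled differently for the three incident fields. For a plane wave the boundary relation collapses to $\nu\times p=0$, i.e.\ $\nu=\pm p$, which cannot hold for three linearly independent normals at $O$, so no eigenvalue hypothesis is needed. For a magnetic-dipole point source the reflection argument forces $E_2$ to be analytic at the source point $y$, contradicting the singularity of $E^{in}$ there, again with no extra hypothesis. For a Herglotz wave the impedance energy identity used in \S\ref{Main theorem} is unavailable (the boundary term $i\lambda\int_{\partial D_2}|\nu\times E^{in}|^2\,\D s$ disappears), so instead I would read $E^{in}$ as a solution of the interior problem $\nabla\times\nabla\times E^{in}=k^2E^{in}$ in $D_2$ with $\nu\times E^{in}=0$ on $\partial D_2$ and invoke the standing assumption that $k^2$ is not an eigenvalue of $\nabla\times\nabla\times$ with vanishing tangential trace; this forces $E^{in}\equiv 0$, the required contradiction. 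This is precisely why the eigenvalue condition is imposed only in the Herglotz case.

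The ``moreover'' statement follows by contraposition: were $E$ analytic across some vertex $O$ of the single scatterer $D$, the same reflection and path argument applied to the $\ge 3$ planes through $O$ would continue $E^{sc}$ to an entire radiating field, forcing $E^{sc}\equiv 0$ and then $E^{in}\equiv 0$ (or a source-singularity contradiction), which is impossible; hence $E$ is non-analytic at every vertex. The main obstacle I anticipate is purely geometric: verifying that the propagation-of-analyticity argument still closes under the weaker ``connected exterior'' hypothesis rather than full convexity. One must check at each reflection step that the reflected segment remains in $\Rb^3\setminus\overline{D}$ so that Corollary~\ref{cor3.2} applies; the point-to-point nature of the Dirichlet reflection (its independence of the geometric assumption on $\Omega$) is exactly what makes this plausible, but the bookkeeping of which neighbouring face supplies the symmetry plane, together with tracking the Maxwell-eigenvalue obstruction under Herglotz incidence, is where the care is required.
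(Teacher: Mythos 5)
Your strategy---pass to the $\lambda\to\infty$ point-to-point reflection $\widetilde{E}(x)=-R_\Pi E(R_\Pi x)$, rerun the propagation argument with Dirichlet versions of Corollaries \ref{cor3.1} and \ref{cor3.2}, and close with three case-specific contradictions (plane wave: $\nu\times p=0$ cannot hold for all the normals involved; point source: analyticity at $y$ contradicts the source singularity; Herglotz wave: the eigenvalue hypothesis on $k^2$ forces $E^{in}\equiv 0$)---is the same as the paper's, and your three endgames, as well as your explanation of why the eigenvalue hypothesis is needed only in the Herglotz case, match the paper exactly.

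However, there is a genuine gap in the uniqueness part, and it is exactly the obstacle you flag at the end without resolving. Corollary \ref{PEC} allows $D$ to be \emph{non-convex} (the paper notes immediately after the corollary that only two-dimensional screens are excluded), whereas your argument is anchored on convexity at two points: locating a vertex $O\in\partial D_1$ with a neighbourhood disjoint from $\overline{D}_2$ whose adjacent faces inherit the boundary condition from the identity $E_1=E_2$, and running Cases (i)/(ii) of the Herglotz proof, where Case (i) uses that the convex $D_2$ lies on one side of a plane and Case (ii) builds the symmetric domain $\Omega$ using convexity of $D_2$. For non-convex scatterers such a vertex accessible from the unbounded component of $\Rb^3\setminus\overline{D_1\cup D_2}$ need not exist---this is precisely the paper's remark in \S\ref{sec:remarks}, which states that the vertex-based proof does not extend to non-convex obstacles. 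The paper's proof of Corollary \ref{PEC} therefore does not argue from a vertex at all: it invokes the path argument of \cite{Liu_Yamamoto_Zou_Reflection principle_sound-soft_Maxwell}, which iterates the point-to-point reflection---a perfectly conducting plane reflected across another perfectly conducting plane remains perfectly conducting wherever the field extends, with no geometric restriction on the domain---until it produces a hyperplane $\Pi$ on which $\nu\times E=0$ and such that $D_j$ lies entirely on one side of $\Pi$; only then does Corollary \ref{cor3.1} (in its Dirichlet form) yield $E^{sc}\equiv 0$. Without importing that path argument, or an equivalent device, your proof establishes the corollary only for convex polyhedra, which is strictly weaker than the stated result.
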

	\begin{proof} Let $E^{in}$ be an incident plane wave with the incident direction $d\in \Sb^2$ and polarization direction $p\in \Sb^2$.
	Suppose that two perfect polyhedral conductors $D_1$ and $D_2$ generate identical electric far-field patterns but $D_1\neq D_2$.
	Combining the path arguments of
	\cite{Liu_Yamamoto_Zou_Reflection principle_sound-soft_Maxwell} and the uniqueness proof in  Theorem \ref{Main theorem scattering} for incoming waves given by electromagnetic Herglotz waves, one can always find a perfectly conducting hyperplane $\Pi\subset\Rb^3$ such that $D_j$ ($j=1$ or $j=2$) lies completely on one side of $\Pi$. In fact, such a plane $\Pi$ can be found by applying the 'point-to-point' reflection principle with the perfectly conducting boundary condition. This implies that the total electric field $E$ can be analytically extended into the whole space, leading to $E^{sc}\equiv 0$ in $\Rb^3$ and thus $\nu\times E^{in}=0$ on $\partial D_j$. Hence, we get $\nu\times p=0$ for any normal direction on $\partial D$, which is impossible.
	
	If $E^{in}$ is an electric Herglotz function, by the assumption of $k^2$ one can also get the vanishing of $E^{in}$.
	In the case that $E^{in}=E^{in}(x,y)$ is an electric point source wave emitting from the source position $y\in \Rb^3\backslash\overline{D}_j$, one can prove that $E_2$ satisfies the Dirichlet boundary condition on the entire plane which extends a face of $D_2$; see the previous section in the impedance case. Using the \textquotedblleft point-to-point" reflection principle together with the path argument,
	this could lead to the analyticity of $E_2$ at $x=y$, which is a contradiction to the singularity of $E_2$ at the source position; see \cite{HuLiu2014} in the acoustic case.
	
	The impossibility of analytical extension across a vertex can by proved analogously.
\end{proof}
	Note that the perfectly conducting polyhedron in Corollary \ref{PEC} is allowed to be non-convex, but cannot contain two-dimensional screens on its closure. The incident wave appearing in Corollaries \ref{singular} and \ref{PEC} can be a plane wave, Herglotz wave function or a point source wave. 
		\subsection{Green's tensor to the Maxwell's equations in a half space with the impedance boundary condition} As another application of the reflection principle, we derive the Green's tensor $G_I(x,y)\in \Cb^{3\times3}$ for the Maxwell's equations in the half space $\Rb_+^3:=\{x:x_3>0\}$ with the impedance boundary condition enforcing on $\Pi:=\{x:x_3=0\}$, that is, for any constant vector $\vec{a}\in \Rb^3$,
	\begin{align}
	\begin{aligned}
	&\nabla\times \lb \nabla\times G_I(x,y)\vec{a}\rb -k^{2}G_I(x,y)\vec{a}=\delta(x-y)\vec{a},\quad\;\;\quad \mbox{in}\ x_3>0,\\
	&\nu\times\lb \nabla\times G_I(x,y)\vec{a}\rb +i\lambda \nu\times\lb \nu\times G_I(x,y)\vec{a}\rb=0, \qquad \mbox{on}\ x_3=0.
	\end{aligned}
	\end{align}
	For this purpose, we need the free-space Green's tensor given by
	\ben
	G(x,y):=\Phi(x,y)\mathbf{I}+\frac{1}{k^2} \nabla_y\nabla_y\Phi(x,y),\quad x\neq y,
	\enn
	where $\mathbf{I}$ is the $3\times 3$ identity matrix and
	$\nabla_y\nabla_y\Phi(x,y)$ is the Hessian matrix for $\Phi$ defined by
	\ben
	\left(\nabla_y\nabla_y\Phi(x,y)\right)_{l,m}=
	\frac{\partial^2\Phi(x,y)}{\partial_{y_l}\partial_{y_m}},\quad 1\leq l,m\leq 3,\quad y=(y_1,y_2,y_3)\in \Rb^3.
	\enn
	Note that here $\Phi(x,y):=\frac{1}{4\pi}\frac{e^{ik\lvert x-y\rvert}}{\lvert x-y\rvert}$ is the fundamental solution to Helmholtz equation in three dimension.
	
	Following the arguments from  \cite[Corollary 2.2]{Elschner_Hu_Elastic}),
		we can prove
	\begin{lemma}
		Denote by $R_{\Pi}$ the reflection with respect to the plane $\Pi$ and by $\mathcal{D}_{x}$ the action with respect to $x$, of the operator $\mathcal{D}:=(\mathcal{D}_1, \mathcal{D}_2,\mathcal{D}_3)$ defined in \eqref{E3 tilde extension Theorem} and \eqref{Extension formula for Ej Theorem}. Then the impedance Green's tensor $G_I(x,y)$ can be represented as
		\ben
		G_{I}(x,y)=G(x,y)+\mathcal{D}_{x}G(R_{\Pi}x,y),\ x\neq y,\quad x,y\in \Rb_+^3.
		\enn
		Here the action of $\mathcal{D}_x$ on the tensor $G$ is understood column-wisely.
	\end{lemma}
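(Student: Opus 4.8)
The plan is to verify that the field on the right-hand side possesses the three defining properties of the half-space impedance Green's tensor---the correct point source in $\Rb_+^3$, the homogeneous Maxwell system away from the source, and the impedance boundary condition on $\Pi$---and then to invoke the well-posedness of the half-space impedance problem to conclude that it must coincide with $G_I$. Throughout I fix $y\in\Rb_+^3$ and a constant vector $\vec a$, and write $E^{\mathrm{cor}}(x):=\mathcal D_x\,G(R_\Pi x,y)\vec a$ for the correction column, so that the claimed identity reads $G_I(x,y)\vec a=G(x,y)\vec a+E^{\mathrm{cor}}(x)$.

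First I would record the source structure. Since $y_3>0$, for every $x\in\overline{\Rb_+^3}$ the reflected point $R_\Pi x$ lies in the closed lower half-space, and hence $R_\Pi x\neq y$; consequently the column $x\mapsto G(R_\Pi x,y)\vec a$, and therefore also $E^{\mathrm{cor}}$, is real-analytic up to and including $\Pi$. Thus the only singularity of the candidate in $\Rb_+^3$ is that of the free-space tensor $G(\cdot,y)\vec a$ at $x=y$, which already produces the prescribed source $\delta(x-y)\vec a$ for the operator $\nabla\times\nabla\times(\cdot)-k^2(\cdot)$.

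Next I would show that $E^{\mathrm{cor}}$ solves the homogeneous Maxwell system in $\Rb_+^3$. The key point is that $G(\cdot,y)\vec a$ is a genuine divergence-free Maxwell solution on the lower half-space $\{\xi_3<0\}$, so each component of $x\mapsto G(R_\Pi x,y)\vec a$ solves the scalar Helmholtz equation, the Laplacian being invariant under $R_\Pi$. Feeding this field into $\mathcal D$ column-wise and repeating, mutatis mutandis, the integration-by-parts computations of Steps~2 and~3 in the proof of Theorem~\ref{Main theorem reflection}---now using the relations satisfied by the free-space tensor in place of the impedance conditions used there---one obtains $\Delta E^{\mathrm{cor}}+k^2E^{\mathrm{cor}}=0$ and $\nabla\cdot E^{\mathrm{cor}}=0$ in $\Rb_+^3$; in particular the integral kernels in \eqref{E3 tilde extension Theorem}--\eqref{Extension formula for Ej Theorem} are exactly what restores the divergence-free condition that the bare argument-reflection destroys. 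As a consistency check, letting $\lambda\to\infty$ collapses $\mathcal D$ to the component map $(E_1,E_2,E_3)\mapsto(-E_1,-E_2,E_3)$, whence $E^{\mathrm{cor}}$ degenerates to $-R_\Pi\,G(R_\Pi x,y)\vec a$, the classical perfectly-conducting image field.

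The main obstacle is the impedance boundary condition on $\Pi$, where I expect the real work. On $\Pi$ one has $x_3=0$, so $R_\Pi x=x$ and, by \eqref{E3 tilde extension Theorem}--\eqref{Extension formula for Ej Theorem}, the integral contributions to $\mathcal D$ vanish, so that $E^{\mathrm{cor}}$ merely reproduces the trace of $G(x,y)\vec a$ on the boundary; the condition must therefore emerge from the interplay of the tangential curl and the tangential trace rather than from either alone. I would establish it by a direct computation mirroring Step~1 (the Cauchy-data matching) in the proof of Theorem~\ref{Main theorem reflection}: differentiating the explicit formulas in $x_3$, evaluating at $x_3=0$, and combining the boundary traces of $G(\cdot,y)\vec a$ and its normal derivative yields $\nu\times(\nabla\times G_I\vec a)+i\lambda\,\nu\times(\nu\times G_I\vec a)=0$ on $\Pi$, the cancellations being precisely the ones the operator $\mathcal D$ was designed to enforce. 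Finally, since $E^{\mathrm{cor}}$ is source-free and the candidate satisfies the Silver--M\"uller radiation condition, the uniqueness of the half-space impedance scattering problem identifies the constructed field with $G_I$, completing the argument.
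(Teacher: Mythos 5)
You should first be aware that the paper does not actually prove this lemma: it states it with the remark that it follows ``following the arguments of \cite[Corollary 2.2]{Elschner_Hu_Elastic}'', so your verification-plus-uniqueness outline is indeed the natural skeleton such a proof would have. The difficulty is that its central step fails. You claim that $E^{\mathrm{cor}}(x)=\mathcal{D}_{x}G(R_{\Pi}x,y)\vec{a}$ solves the homogeneous Maxwell system in $\Rb_{+}^{3}$ by repeating, mutatis mutandis, Steps~2 and~3 of the proof of Theorem \ref{Main theorem reflection}, ``using the relations satisfied by the free-space tensor in place of the impedance conditions used there''. There are no such substitute relations: in Steps~2 and~3 the boundary terms created by the integrations by parts are cancelled \emph{exactly} by the impedance relations $\partial_{3}E_{3}-\frac{k^{2}}{i\lambda}E_{3}=0$ and $\partial_{j}E_{3}-\partial_{3}E_{j}-i\lambda E_{j}=0$ on $\gamma$, and the reflected free-space column $w(x):=G(R_{\Pi}x,y)\vec{a}$ satisfies neither. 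Concretely, for any field $w$ whose third component solves the Helmholtz equation in $x_{3}>0$, the same integration by parts as in Step~2 gives
\begin{equation*}
\lb \Delta+k^{2}\rb \lb \mathcal{D}w\rb_{3}(x)\;=\;-\frac{2k^{2}}{i\lambda}\,e^{-\frac{k^{2}}{i\lambda}x_{3}}\left[\partial_{3}w_{3}-\frac{k^{2}}{i\lambda}w_{3}\right](x',0),
\end{equation*}
and with $w_{3}(x)=\lb G(R_{\Pi}x,y)\vec{a}\rb_{3}$ the bracket equals minus the impedance trace of the free Green's tensor on $\Pi$, a non-vanishing analytic function of $x'$. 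So $E^{\mathrm{cor}}$ is \emph{not} a Maxwell solution; your candidate does not satisfy the PDE whose well-posedness you invoke, and even if your boundary-condition computation on $\Pi$ goes through it cannot compensate for this. (The radiation condition for $E^{\mathrm{cor}}$, which you assert in one clause, would also need an argument, since the segment of integration in $\mathcal{D}$ grows with $x_{3}$.)

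The obstruction is structural, not bookkeeping. In the perfectly conducting and elastic cases (\cite{Liu_Yamamoto_Zou_Reflection principle_sound-soft_Maxwell}, \cite{Elschner_Hu_Elastic}) the reflection is point-to-point, $E\mapsto QE\circ R_{\Pi}$, and such a map sends solutions to solutions \emph{unconditionally}; this is why feeding the reflected free-space tensor into it produces a valid image field. The impedance reflection operator $\mathcal{D}$ of Theorem \ref{Main theorem reflection} preserves solutions only when its input already satisfies the impedance condition on $\Pi$, which $G(R_{\Pi}\,\cdot\,,y)\vec{a}$ does not. A genuine image representation of $G_{I}$ must instead superpose a half-line of image sources below $R_{\Pi}y$ --- equivalently, replace the segment integrals $\int_{0}^{x_{3}}$ in \eqref{E3 tilde extension Theorem}--\eqref{Extension formula for Ej Theorem} by ray integrals $\int_{x_{3}}^{\infty}$ with the conjugate exponential kernels, as in the classical Sommerfeld construction for the impedance half-space Helmholtz Green's function. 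With that operator each constituent is a Maxwell solution in $x_{3}>0$ term by term, and your scheme (source structure, boundary condition, radiation condition, uniqueness) would then run; as written, however, any argument that inserts $G(R_{\Pi}\,\cdot\,,y)\vec{a}$ directly into the operator $\mathcal{D}$ of Theorem \ref{Main theorem reflection} cannot establish the stated formula.
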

	\section{A data-driven imaging scheme}\label{numeric}
The aim of this section is to establish a data-driven inversion scheme for imaging arbitrarily convex-polyhedral scatterers.
Motivated by the one-wave factorization method in inverse elastic scattering \cite{Elschner_Hu_Elastic}, we shall propose a domain-defined indicator functional to characterize an inclusion relationship between a test domain  and our target; see also \cite{HuLi2020} in the acoustic case. Being different from other domain-defined sampling approaches (\cite{KPS, KS, P2003, P2007,PS2010}) arising from inverse scattering, our scheme will be interpreted as a data-driven method, because it  relies on measurement data corresponding to a priori given test domains. In this paper, we shall take for simplicity perfectly conducting balls with different centers and radii as test domains. Similar techniques were used in the Extended Linear Sampling Method \cite{LS} for extracting information of a sound-soft obstacle from a single far-field pattern.

Consider the scattering of an incident plane wave $E^{in}=ik\lb d\times p\rb \times d e^{ikx\cdot d}$ by
a ball $B_{h}(z):=\{x\in\Rb^{3}:\ \lvert x-z\rvert<h\}$ with $h>0, z\in \Rb^3$, where $d\in \Sb^{2}$ is the incident direction and $p\in\Rb^{3}$ is a polarization vector. Then the total field $E=E^{in}+E^{sc}$ satisfies
\begin{align}\label{Sound-soft Maxwell equation}
\begin{aligned}
\begin{cases}
\nabla\times\lb \nabla\times E\rb -k^{2}E=0\quad\,\, \mbox{in}\quad\ \lvert x-z\rvert>h,\\
\lb\nu\times E\rb\times \nu=0\quad\qquad\quad\quad \mbox{on}\quad\ \lvert x-z\rvert=h,\\
\lim_{\lvert x\rvert \rightarrow \infty}\lb E^{sc}\times \widehat{x}+\frac{1}{ik}\nabla\times E^{sc}\rb \lvert x\rvert=0,\ \ \widehat{x}=\frac{x}{r}.
\end{cases}
\end{aligned}
\end{align}
It is well known that \eqref{Sound-soft Maxwell equation} has a series solution $E(\widehat{x};d,p,h,z)$ for a given $E^{in}(x;d,p)$ (\cite{Monk}). For notational convenience we will omit the dependance of solutions on $d$, $p$ and $k$ (all of them are fixed in our arguments) and only indicate the dependance on the center $z\in \Rb^3$ and radius $h>0$ of the ball $B_h(z)$.  
Denote by $E^{\infty}(\widehat{x};h, z)$ the electric far-field pattern of the scattered electric field $E^{sc}$.
We expand $E^{\infty}(\widehat{x};h,z)$ into a series by using vector spherical harmonics. For any orthonormal system $Y^{m}_{n}$, $m=-n,\dots,n$ of spherical harmonics of order $n>0$, the tangential fields defined on the unit sphere
\begin{align*}
U^{m}_{n}\lb \widehat{x}\rb :=\frac{1}{\sqrt{n\lb n+1\rb}}\,\mbox{Grad}\,Y^{m}_{n}(\widehat{x});\ \ V^{m}_{n}(\widehat{x}):=\widehat{x}\times U^{m}_{n}(\widehat{x})
\end{align*}
are called vector spherical harmonics of order $n$. By coordinate translation, it is easy to check that $E^{\infty}$ can be expanded into the convergent series (\cite{CFH03,Monk})
\begin{align}\label{Far-field pattern}
E^{\infty}(\widehat{x};h,z)e^{ikz\cdot \widehat{x}}=4\pi \sum_{n=1}^{\infty}\sum_{m=-n}^{n}\lb u_{n}^{(h)}\, [\overline{U^{m}_{n}}(\widehat{x})\cdot p]\,U^{m}_{n}+v_{n}^{(h)} \,[\overline{V^{m}_{n}}\cdot p]V^{m}_{n}\rb
\end{align}
where \[u^{(h)}_{n}:=\frac{\psi'_{n}(kh)}{(\zeta^{(1)}_{n})'(kh)}\in \mathbb{C} ,\quad v_{n}^{(h)}:=-\frac{\psi(kh)}{\zeta_{n}^{(1)}(kh)}\in\mathbb{C},
\]
with $\psi_{n}(t):=tj_{n}(t)$ and $\zeta_{n}^{(1)}(t):=th_{n}^{(1)}(t).$
Here $j_n$ is the spherical Bessel function of order $n$ and $h_n^{(1)}$ is the spherical Hankel function of first kind of order $n$.
Denote the far-field operator $F^{(z,h)}:T(\Sb^{2})\mapsto T(\Sb^{2})$ by
\begin{align}\label{Far-field operator}
\lb F^{(z,h)}g\rb(\widehat{x}):=\int\limits\limits_{\Sb^{2}}E^{\infty}(\widehat{x},d,g(d),h,z)\D s(d)
\end{align}
where
$T(\Sb^{2}):=\{g\in L^2 (\Sb^2)^3: g(\widehat{x})\cdot\widehat{x}=0$ for all $\widehat{x}\in \Sb^2\}$
denotes the tangential space defined on $\Sb^2$.
The expression \eqref{Far-field pattern} shows that $F^{(z,h)}$ is diagonal in the basis
\[
\widetilde{U}^{(z)}_{m,n}(\widehat{x}):=e^{-ikz\cdot \widehat{x}} U^{m}_{n}(\widehat{x}),\quad
\widetilde{V}^{(z)}_{m,n}(\widehat{x}):=e^{-ikz\cdot \widehat{x}} V^{m}_{n} (\widehat{x}).
\]
It can be  verified that $\lb 4\pi u^{(h)}_{n},4\pi v^{(h)}_{n}\rb$ and $\lb \widetilde{U}^{(z)}_{m,n},\widetilde{V}^{(z)}_{m,n}\rb$ are eigenvalues and the associated eigenvectors of $F^{(z,h)}$. Note that the eigenvalues depend on the radius $h$ only and the eigenfunctions depend on the location $z$ only.
We refer to \cite{CFH03} for detailed analysis when the ball is located at the origin. The general case can be easily justified via coordinate translation.

To proceed, we suppose that $w^\infty\in T(\Sb^2)$ is the electric field pattern of some radiating electric field $w^{sc}$ in $|x|>b$ for some $b>0$ sufficiently large.
Introduce the function
\begin{align}\label{Indicator function}
I_{w^\infty}(z,h):=\frac{1}{4\pi}\sum_{n=0}^{\infty}\sum_{m=-n}^{n}\lb \frac{\lvert \langle w^{\infty},\widetilde{U}^{(z)}_{m,n}\rangle\rvert^{2}}{\lvert u^{(h)}_{n}\rvert}+\frac{\lvert \langle w^{\infty},V^{(z)}_{m,n}\rangle\rvert^{2}}{\lvert v^{(h)}_{n}\rvert}\rb,
\end{align}
where $z\in \Rb^3$ and $h>0$ will be referred to as sampling variables in this paper. Equation \eqref{Indicator function} can be regarded as a functional defined on the test domain $B_h(z)$. If the above series is convergent, we shall prove below that the radiated electric field $w^{sc}$ can be analytically extended at least to the exterior of the test domain $B_h(z)$. For simplicity we still denote by $w^{sc}$ the extended solution.

\begin{lemma}\label{Lemma in numerics} Suppose that $k^2$ is not the Dirichlet eigenvalue (that is, the tangential component {of the electric field} vanishes) of the operator $\rm{curl}\rm{curl}$ over the ball $B_h(z)$.
	We have $I_{w^\infty}(z,h)<\infty$ if and only if $w^{\infty}$ is the far-field pattern of the radiating field $w^{sc}$ which satisfies
	\begin{equation}\label{w}
	\nabla\times\lb\nabla\times w^{sc}\rb -k^{2}w^{sc}=0\quad \mbox{in}\ \lvert x-z\rvert>h,\quad
	\nu\times w^{sc}\times \nu\in H^{-1/2}_{\rm{curl}}(\PD B_{h}(z)).
	\end{equation}
	{	Here  $H^{-1/2}_{\rm{curl}}(\PD D)$ denotes the trace space of $$H\lb \rm{curl},D\rb =\{\phi\in (L^{2}(D))^3:\ \nabla\times \phi\in (L^{2}(D))^3\}$$ of a bounded Lipschitz domain $D\subset \Rb^3$,  given by
		\[
		H^{-1/2}_{\rm{curl}}(\PD D):=\left\{u\in \lb H^{-1/2}(\PD D)\rb^{3}:\ \nu\cdot u=0,  \nabla\times u\in \lb H^{-1/2}(\PD D)\rb^{3} \mbox{on}\; \PD D \right\}.
		\]
	}
	\begin{proof} Without loss of generality we may assume that $B_h(z)$ is located at the origin, so that $\widetilde{U}^{(z)}_{m,n}=U_n^m$ and 	$\widetilde{V}^{(z)}_{m,n}=V_n^m$. Since the assumption on the wavenumber $k$ ensures that (see \cite[Chapter 5]{Nedelec} for related discussions)
		\[
		j_n(t)\neq0\quad \mbox{and}\quad j_n(t)+tj'_n(t)\neq 0 \qquad \mbox{for}\quad t=kh, \;n=1,2\cdots,
		\]
		we have $|u_n^{(h)}|\neq 0$ and $|v_n^{(h)}|\neq 0$ for all $n$.
		By \cite[Equation $6.73$]{Colton_Kress_Book}) it follows that $w^{sc}$ can be expressed as
		\[w^{sc}(x)=\sum_{n=1}^{\infty}\frac{1}{n(n+1)}\sum_{m=-n}^{n} \Big[ a^{m}_{n}q^{m}_{n}(x)+b^{m}_{n}\nabla\times q^{m}_{n}(x)\Big]\quad\mbox{in}\quad |x|>b,\]
		with the coefficients $a_n^m, b_n^m\in \mathbb{C}$ and
		$q^{m}_{n}(x):=\nabla\times \{xh^{(1)}_{n}(k\lvert x\rvert)Y^{m}_{n}(\widehat{x})\}$. Correspondingly, the far-field pattern $w^{\infty}$ is given by (see Equation $(6.74)$ on page $219$ in \cite{Colton_Kress_Book}):
		\[w^{\infty}(\widehat{x})=\frac{i}{k}\sum_{n=1}^{\infty}\frac{1}{i^{n+1}}\sum_{m=-n}^{n}\lb ikb_n^mU^{m}_{n}(\widehat{x})-a^{m}_{n}V^{m}_{n}(\widehat{x})\rb.\]
		Inserting the above expression into \eqref{Indicator function}, we get
		\begin{align}\label{I}
		I_{w^{\infty}}(o,h)=\frac{1}{4\pi}\sum_{n=1}^{\infty}\sum_{m=-n}^{n}\lb \frac{\lvert b^{m}_{n}\rvert^{2}}{\lvert u^{(h)}_{n}\rvert}+\frac{\lvert a^{m}_{n}\rvert^{2}}{\lvert v^{(h)}_{n}\rvert}\rb,\quad o=(0,0,0).
		\end{align}
		To analyze the convergence of the above series, we need the asymptotic behavior of $u_n^{(h)}$ and $v_n^{(h)}$ as $n\rightarrow+\infty$.	Using the asymptotics of special functions for large orders, 	
		it is easy to observe that
		\ben
		&&\frac{1}{\lvert u^{(h)}_{n}\rvert}=\left\lvert \frac{\lb\zeta_{n}^{(1)}\rb'}{\psi_{n}'}\right\rvert=\left\lvert \frac{\lb th_{n}^{(1)}(t)\rb'}{\lb tj_{n}(t)\rb'}\Bigg|_{t=kh}\right\rvert\sim \left\lvert \frac{\lb h^{(1)}_{n}\rb'(kh)}{j_{n}'(kh)}\right\rvert\sim \frac{C_{1}}{\lvert n\rvert}\left\lvert \lb h^{(1)}_{n}\rb'(kh)\right\rvert^{2},\\
		&&\frac{1}{\lvert v^{(h)}_{n}\rvert}=\left\lvert \frac{\zeta^{(1)}_{n}(kh)}{\psi_{n}(kh)}\right\rvert=\left\lvert \frac{h^{(1)}_{n}(kh)}{j_{n}(kh)}\right\rvert \sim C_{2}\,\lvert n\rvert\, \lvert h^{(1)}_{n}(kh)\rvert^{2},\enn
		as $n\rightarrow\infty$, where $C_{1}, C_{2}\in \mathbb{C}$ are fixed constants. Thus it follows from \eqref{I} that
		\begin{align}\label{Approximation for Iws}
		\begin{aligned}
		I_{w^{\infty}}(o,h) \sim \sum_{n=1}^{\infty}\sum_{m=-n}^{n}\lb \frac{C_{1}\lvert b_{n}^{m}\rvert^{2}\lvert h^{(1)}_{n}{'}(kh)\rvert^{2}}{\lvert n\rvert}+C_{2}\lvert a^{m}_{n}\rvert^{2}\lvert n\rvert \lvert h^{(1)}_{n}(kh)\rvert^{2}\rb
		\end{aligned}
		\end{align}
		On the other hand, it is seen from the expression of $w^{sc}$ that on $|x|=h$, 	\ben
		\lb \widehat{x}\times w^{sc}\times \widehat{x}\rb=\sum_{n=1}^{\infty}\sum_{m=-n}^{n}\left\{\frac{b^{m}_{n}}{h}\lb th^{(1)}_{n}(t)\rb' \Big|_{t=kh}U^{m}_{n}(\widehat{x})-a^{m}_{n}h^{(1)}_{n}(kh)V^{m}_{n}(\widehat{x})\right\}.
		\enn
		By definition of the $H^{-1/2}_{\rm{curl}}(\PD B_{h}(z))$ norm (see e.g. \cite[Chapter 5]{Nedelec} and \cite[Chapter 9.3.3]{Monk}) we obtain
		\begin{align}\nonumber
		&\lVert \widehat{x}\times w^{sc}\times \widehat{x}\rVert_{H^{-1/2}_{\rm{curl}}(\PD B_{h}(z))}\\ \nonumber
		&=\sum_{n=1}^{\infty}\sum_{m=-n}^{n} \lb\frac{1}{\sqrt{n\lb n+1\rb}}\frac{\lvert b^{m}_{n}\rvert^{2}}{\lvert h\rvert^{2}}\left[\lb th^{(1)}_{n}(t)\rb'\Big|_{t=kh}\right]^2+\sqrt{n\lb n+1\rb}\lvert a^{m}_{n}\rvert^{2}\lvert h^{(1)}_{n}(kh)\rvert^{2}\rb\\ \label{Approximation for norm f}
		&\ \ \sim \sum_{n=1}^{\infty}\sum_{m=-n}^{n} \lb\frac{1}{n}\lvert b^{m}_{n}\rvert^{2}\lvert h^{(1)}_{n}{'}(kh)\rvert^{2}+n\lvert a^{m}_{n}\rvert^{2}\lvert h^{(1)}_{n}(kh)\rvert^{2}\rb.
		\end{align}
		Obviously, \eqref{Approximation for Iws} and \eqref{Approximation for norm f} have the same convergence. In the same manner, one can prove that \eqref{Approximation for Iws} has the same convergence with
		$||\nu\times w^{sc}||_{H_{\rm div}^{-1/2}(\partial B_h(z))}$ where
		\[
		H^{-1/2}_{\rm{div}}(\PD D):=\{u\in \lb H^{-1/2}(\PD D)\rb^{3}:\ \nu\cdot u=0\ \mbox{on}\; \PD D\; \mbox{and}\ \lb \rm{Div}\, u\rb\in H^{-1/2}(\PD D)\}.
		\]
		Using the relation
		\begin{eqnarray*}
			&&||\nu\times w^{sc}||_{L^{2}(\partial B_h(z))}+
			||\nu\times w^{sc}\times\nu||_{L^{2}(\partial B_h(z))}\\
			&\leq& C\,\lb ||\nu\times w^{sc}||_{H_{\rm div}^{-1/2}(\partial B_h(z))}+
			||\nu\times w^{sc}\times \nu||_{H_{\rm curl}^{-1/2}(\partial B_h(z))}\rb,
		\end{eqnarray*}
		we conclude that the tangential components of $w^{sc}$ on $\partial B_h(z)$ are convergent in the $L^2$-sense, if $I_{w^{\infty}}(o,h)<\infty$.  This together with
		\cite[Theorem 6.27]{Colton_Kress_Book} implies that $w^{sc}$ is a solution to the Maxwell's equations in $|x|>h$.
		The proof of Lemma \ref{Lemma in numerics} is thus complete.
	\end{proof}
\end{lemma}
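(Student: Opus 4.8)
The plan is to diagonalize every object in the vector spherical harmonic basis and then compare the two weighted series that arise, one from the indicator $I_{w^\infty}$ and one from the trace norm of $w^{sc}$ on the sphere. \textbf{Reduction and well-posedness.} First I would translate coordinates so that the test ball $B_h(z)$ is centered at the origin; under this shift the modified harmonics $\widetilde{U}^{(z)}_{m,n}, \widetilde{V}^{(z)}_{m,n}$ reduce to $U^m_n, V^m_n$, since the phase $e^{-ikz\cdot\widehat{x}}$ appearing in their definition is exactly the factor relating the two far-field patterns (cf. \eqref{Far-field pattern}). The hypothesis that $k^2$ is not a Dirichlet eigenvalue of $\operatorname{curl}\operatorname{curl}$ on $B_h(z)$ is equivalent to $j_n(kh)\neq 0$ and $j_n(kh)+kh\,j_n'(kh)\neq 0$ for every $n\geq 1$ (see \cite[Chapter 5]{Nedelec}); this guarantees $|u^{(h)}_n|\neq 0$ and $|v^{(h)}_n|\neq 0$, so each summand of $I_{w^\infty}$ in \eqref{Indicator function} is finite and the indicator is a well-defined element of $[0,\infty]$.

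\textbf{Multipole expansion and substitution.} Since $w^{sc}$ is radiating in $|x|>b$, I would expand it in the exterior multipole fields $q^m_n$ and $\nabla\times q^m_n$ with coefficients $a^m_n, b^m_n$ following \cite[Eq.~(6.73)]{Colton_Kress_Book}, read off the corresponding far-field $w^\infty$ in the basis $U^m_n, V^m_n$ from \cite[Eq.~(6.74)]{Colton_Kress_Book}, and insert the result into \eqref{Indicator function}. Because the far-field operator $F^{(z,h)}$ is diagonal in $(\widetilde{U},\widetilde{V})$, the inner products $\langle w^\infty,\widetilde{U}^{(z)}_{m,n}\rangle$ and $\langle w^\infty,\widetilde{V}^{(z)}_{m,n}\rangle$ collapse (up to fixed constants) to $b^m_n$ and $a^m_n$, so $I_{w^\infty}(o,h)$ becomes the weighted series $\sum_{n,m}\bigl(|b^m_n|^2/|u^{(h)}_n| + |a^m_n|^2/|v^{(h)}_n|\bigr)$.

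\textbf{Asymptotics and matching of the two series.} This is the heart of the argument. Using the standard large-order asymptotics of the spherical Bessel and Hankel functions, I would establish that $1/|u^{(h)}_n|\sim C_1\, n^{-1}\,|{h^{(1)}_n}{}'(kh)|^2$ and $1/|v^{(h)}_n|\sim C_2\, n\,|h^{(1)}_n(kh)|^2$ as $n\to\infty$. In parallel, I would expand the tangential trace $\widehat{x}\times w^{sc}\times\widehat{x}$ on $|x|=h$ in $U^m_n, V^m_n$ and compute its $H^{-1/2}_{\mathrm{curl}}(\partial B_h(z))$ norm from the definitions in \cite[Chapter 5]{Nedelec} and \cite[Chapter 9.3.3]{Monk}; this norm carries \emph{precisely} the weights $n^{-1}|{h^{(1)}_n}{}'(kh)|^2$ and $n\,|h^{(1)}_n(kh)|^2$. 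Hence the indicator series and the trace-norm series converge simultaneously, which yields the stated equivalence. The delicate point I expect here is securing these asymptotic ratios of Riccati--Bessel functions sharply enough that the two series share the same power of $n$ term by term, for \emph{both} the $U$- and $V$-components; any mismatch in the order would destroy the equivalence.

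\textbf{Upgrading to a genuine Maxwell solution.} Finally, from finiteness of $I_{w^\infty}$ I would deduce finiteness of $\|\nu\times w^{sc}\|_{H^{-1/2}_{\mathrm{div}}(\partial B_h(z))}$, which carries the same weights, and then apply the trace inequality bounding the $L^2$ tangential components by the $H^{-1/2}_{\mathrm{curl}}$ and $H^{-1/2}_{\mathrm{div}}$ norms to conclude that the tangential data of $w^{sc}$ on $\partial B_h(z)$ converge in $L^2$. Invoking \cite[Theorem 6.27]{Colton_Kress_Book} then promotes this boundary regularity to a genuine radiating solution of the Maxwell equations in $|x-z|>h$, establishing the reverse implication and completing the proof. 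The converse direction is immediate by reversing the same series identities.
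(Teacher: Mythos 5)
Your proposal follows essentially the same route as the paper's own proof: the same reduction to a ball centered at the origin, the same multipole expansion of $w^{sc}$ via \cite[Eqs.~(6.73)--(6.74)]{Colton_Kress_Book}, the same large-order Riccati--Bessel asymptotics matching the indicator series to the $H^{-1/2}_{\rm curl}$ (and $H^{-1/2}_{\rm div}$) trace norms, and the same conclusion via the trace inequality and \cite[Theorem 6.27]{Colton_Kress_Book}. The argument is correct as outlined, and the point you flag as delicate (the term-by-term matching of the weights in $n$) is exactly the crux the paper's computation verifies.
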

Combining Lemma \ref{Lemma in numerics} and Corollary \ref{singular}, we may characterize an inclusion relation between $D$ and $B_h(z)$ through the measurement data $E^\infty$ of our target and
the spectra of the far-field operator $F^{(z,h)}$ corresponding to the test ball.
\begin{theorem}\label{th4.2}
	Let $E^\infty$ be the electric far-field pattern of a convex-polyhedral scatterer $D$ with a constant impedance coefficient. Suppose that $k^2$ is not the Dirichlet eigenvalue  of the operator $\rm{curl}\rm{curl}$ over the ball $B_h(z)$.
	It holds that
	\[ I_{E^\infty}(z, h)<\infty\qquad \mbox{if and only if}\qquad D\subset \overline{B_z(h)}.
	\]
	Hence we have \[
	D=\bigcap^{(z,h)}_{I_{E^\infty}(z, h)<\infty} B_h(z).\]
\end{theorem}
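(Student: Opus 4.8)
The plan is to prove the stated equivalence first, namely
\[
I_{E^\infty}(z,h)<\infty\quad\Longleftrightarrow\quad D\subset\overline{B_h(z)},
\]
and then to deduce the reconstruction identity from it by a purely geometric fact about convex bodies. The two analytic inputs are Lemma \ref{Lemma in numerics}, which identifies finiteness of the indicator with analytic extendability across $\partial B_h(z)$ of the radiating field carrying the data $E^\infty$, and Corollary \ref{singular}, which forbids the total field $E=E^{in}+E^{sc}$ from being analytic at any vertex of $\partial D$.

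For the direction $D\subset\overline{B_h(z)}\Rightarrow I_{E^\infty}(z,h)<\infty$, I would note that the containment yields $\{x:|x-z|>h\}\subset\Rb^3\backslash\overline D$, so the genuine scattered field $E^{sc}$ is already a radiating solution of the Maxwell system in the exterior of the test ball. For a ball strictly containing $D$ the field is real-analytic in a neighbourhood of $\partial B_h(z)$, hence its tangential trace is smooth and lies in $H^{-1/2}_{\rm curl}(\partial B_h(z))$; the limiting case in which $\partial D$ touches $\partial B_h(z)$ follows by a trace estimate for $H({\rm curl})$ together with a continuity argument. Lemma \ref{Lemma in numerics}, applied with $w^\infty=E^\infty$ and $w^{sc}=E^{sc}$, then gives $I_{E^\infty}(z,h)<\infty$.

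The converse is the heart of the matter, and I would argue by contraposition. Suppose $D\not\subset\overline{B_h(z)}$. Since $\overline D$ is a compact convex polytope and $x\mapsto|x-z|$ is convex, its maximum over $\overline D$ is attained at a vertex $V$ of $\partial D$, and the assumption forces $|V-z|>h$, i.e. $V\in\{x:|x-z|>h\}$. On the other hand, if $I_{E^\infty}(z,h)<\infty$, Lemma \ref{Lemma in numerics} furnishes a radiating Maxwell field in $\{x:|x-z|>h\}$ whose far-field pattern equals $E^\infty$; by Rellich's lemma this field must coincide with the true $E^{sc}$ on the unbounded component where both are defined, so $E^{sc}$, and hence $E=E^{in}+E^{sc}$ (the incident field being analytic at $V$, as its source lies outside $\overline D$), extends analytically as a Maxwell solution across the vertex $V$. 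This contradicts Corollary \ref{singular}. \textbf{The main obstacle is precisely this step}: one must carefully identify the abstract field produced by Lemma \ref{Lemma in numerics} with the physical scattered field via a Rellich/unique-continuation argument, and then transfer the analyticity of $E^{sc}$ at $V$ to analyticity of the \emph{total} field $E$ at the vertex, thereby contradicting Corollary \ref{singular}.

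Finally, the reconstruction formula follows from the equivalence together with the elementary fact that a compact convex set is the intersection of all closed balls containing it. For any $p\notin\overline D$ I would strictly separate $p$ from $\overline D$ by a hyperplane and then exhibit a ball of sufficiently large radius containing $\overline D$ but excluding $p$ (large balls tangent to a supporting hyperplane approximate the corresponding half-space from the side of $\overline D$), which shows $\overline D=\bigcap\{\overline{B_h(z)}:\overline D\subset\overline{B_h(z)}\}$. By the equivalence just proved, the balls with $I_{E^\infty}(z,h)<\infty$ are exactly those containing $D$; the standing hypothesis that $k^2$ avoid the Dirichlet eigenvalues of ${\rm curl}\,{\rm curl}$ on $B_h(z)$ excludes only a discrete set of radii per centre, which one circumvents by an arbitrarily small perturbation of the separating balls. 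Hence $D=\bigcap_{I_{E^\infty}(z,h)<\infty}\overline{B_h(z)}$, which is the asserted identity.
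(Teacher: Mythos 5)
Your proposal is correct and follows essentially the same route as the paper: the containment direction via Lemma \ref{Lemma in numerics} applied to $E^{sc}$, and the converse by locating a vertex outside the test ball and contradicting Corollary \ref{singular} after extending $E^{sc}$ across it. The extra details you supply -- the Rellich identification of the extended field with $E^{sc}$, and the convex-geometry argument (with the eigenvalue-avoidance perturbation) for the intersection identity -- are steps the paper leaves implicit, and they fill genuine small gaps rather than diverge from its method.
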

\begin{proof}
	If $D\subset \overline{B_z(h)}$, the scattered electric field $E^{sc}$ is well defined in $\{x: |x-z|>h\}$ which lies in the exterior of $D$. Hence, $E^{sc}$ satisfies \eqref{w} and by Lemma \ref{Lemma in numerics} it holds that $I_{E^\infty}(z, h)<\infty$. On the other hand, suppose that $I_{E^\infty}(z, h)<\infty$
	but the relation $D\subset \overline{B_z(h)}$ does not hold. Since $D$ is a convex polyhedron, there must exist at least one vertex $O$ of $\partial D$ such that $|O-z|>h$. Again using Lemma \ref{Lemma in numerics}, we conclude that $E^{sc}$ can be extended from $\Rb^3\backslash \overline{D}$ to the exterior of $B_h(z)$. This implies that $E^{sc}$ is analytic at $O$, which contradicts Corollary \ref{singular}.
\end{proof}

By Theorem \ref{th4.2}, the function $h\rightarrow I_{E^\infty}(z, h)$ for fixed $|z|=R$ will blow up when $h\geq \max_{y\in\partial D}|y-z|$, indicating a rough location of $D$ with respect to $z\in \Rb^3$.
In Table \ref{table} we describe an inversion procedure for imaging
an arbitrary convex-polyhedron $D$ by taking both $z\in \partial B_R$ and $h$ as sampling variables.
The mesh for discretizing $h\in(0,2R)$ should be finer than the mesh for $z\in \partial B_R$. To avoid the {assumption} that $k^2$ is not a Dirichlet eigenvalue of ${\rm curl\,curl}$ over $B_{h}(z)$, one may use coated balls by a thin dielectric layer (which can be modeled by the impedance boundary condition)  as test domains in place of our choice of perfectly conducting balls. We refer to \cite[Section 3.1]{CFH03} for a description of the spectra of the far-field operator corresponding to such coated balls centered at the origin. If the impedance coefficient is a positive constant, one can prove that $k^2 $ cannot be an impedance eigenvalue of {\rm curl\,curl} over any boundary Lipschitz domain. It should be remarked that the test domains can also be taken as penetrable balls under the assumption that $k^2$ is not an interior transmission eigenvalue.  Both Theorem \ref{th4.2} and Lemma  \ref{Lemma in numerics} can be carried over to these test domains. Finally, it is worth mentioning that a regularization scheme should be employed to truncate the series \eqref{I}, because the eigenvalues $u_n^{(h)}$ and $v_n^{(h)}$ decay very fast and the calculation of the inner product between $E^\infty$ and the eigenfunctions $\lb \widetilde{U}_{n,m}^{(z)}, \widetilde{V}_{n,m}^{(z)} \rb$ is usually polluted by data noise and numerical errors. We refer to \cite{MH} for numerical examples in inverse acoustic scattering. Numerical tests for Maxwell's equations will be reported in our forthcoming publications.

\begin{table}\label{table}\caption{Data-driven scheme for imaging convex polyhedral scatterers}
	\begin{tabularx}{\textwidth }{>{\bfseries}lX}
		\toprule
		Step 1 & Collect the measurement data $E^{\infty}(\widehat{x})$ for all $\widehat{x} \in \mathbb{S}^2$ and suppose that $D\subset B_R:=\{x: |x|<R\}$ for some large $R>0$. \\ \midrule
		Step 2 & Choose sampling variables $z_{j} \in \{x:\ |x|=R\}$ and  $h_i\in (0, 2R)$ to get the spectra of the far-field operator $F^{(z,h)}$ corresponding to  testing balls $B_{h_j}(z_j)\subset B_R$.\\ \midrule
		Step 3 & Calculate the domain-defined indicator function $I_{E^\infty}(z_j, h_i)$ by \eqref{Indicator function} with $w^\infty=E^\infty$. In particular, it follows from Theorem \ref{th4.2} that
		\[h_i< \max_{y\in\partial D} |z_j-y|\longrightarrow I_{E^\infty}(z_j, h_i)=\infty,\]
		\[
		h_i\geq\max_{y\in\partial D} |z_j-y|\longrightarrow I_{E^\infty}(z_j, h_i)<\infty.\]
		\\ \midrule
		Step 4 &  Image $D$ as the intersection of all test balls $B_{h_i}(z_j)$ such that $I_{E^\infty}(z_j, h_i)<\infty$. \\
		\bottomrule
	\end{tabularx}
\end{table}
\begin{remark}
	In \cite{PS2010}, the No Response Test was discussed for reconstructing convex perfectly conducting polyhedrons with two or a few incident electromagnetic plane waves. In comparison with \cite{PS2010}, our inversion scheme uses only a single incoming wave within a more general class of plane waves, Herglotz wave functions and point source waves. Although both of them belong to the class of domain-defined sampling methods, the computational criterion explored here (see (\ref{Indicator function}) and Theorem \ref{th4.2}) involves simple inner product calculations and new sampling schemes due to the special choice of testing balls.
\end{remark}
	   \section*{Acknowledgments}
	   {The authors would like to thank the three anonymous referees for their comments and suggestions which help improve the original manuscript.}


\begin{thebibliography}{99}
	   	\vspace*{1mm}
	   	\bibitem{Alessandrini_Rondi_acoustic}G. Alessandrini and L. Rondi, Determining a sound-soft polyhedral scatterer by a single far-field
	   	measurement, Proc. Amer. Math. Soc., 6 (2005) 1685-91 (Corrigendum: http://arxiv.org/abs/math.AP/0601406).
	   	\bibitem{BPS}{E. Blasten, L. Paivarinta and S. Sadique, Unique determination of the shape of a scattering screen from a passive measurement, Mathematics, 8 (2020): 1156.}
	   
	   	
	   	\bibitem{Cakoni} F. Cakoni and D. Colton, {\em A Qualitative Approach to Inverse Scattering Theory}, Springer, Newyork, 2014.
	   	
	   	\bibitem{CCM} F. Cakoni, D.  Colton and P. Monk,   The electromagnetic  inverse  scattering  problem  for  partially  coated Lipschitz domains, Proceedings of the Royal Society of Edinburgh: Section A Mathematics 134 (2004): 661-682, doi:10.1017/S0308210500003413.
	   	
	   	\bibitem{CHM} D.  Colton, H. Haddar and P. Monk,  The linear  sampling method for solving the electromagnetic  inverse scattering problem, SIAM J. Sci. Comput. 24 (2002):  719-731.
	   	
	   	\bibitem{CHP} D.  Colton, H. Haddar and M. Piana,  The linear sampling method in inverse electromagnetic scattering theory, Inverse Problems 19 (2003): S105-S137.
	   	
	   
	   	
	   	\bibitem{Cheng_Yamamoto} J. Cheng  and M. Yamamoto,  Uniqueness in an inverse scattering problem within non-trapping polygonal
	   	obstacles with at most two incoming waves, Inverse Problems, 19 (2003)  1361-84. (Corrigendum: Inverse Problems 21 (2005): 1193)
	   	
	   	\bibitem{Cheng2003} 	J. Cheng and M. Yamamoto, Global uniqueness in the inverse acoustic scattering problem within polygonal obstacles, Chinese Ann. Math. Ser. B 25 (2004), 1-6.
	   	
	   	\bibitem{Colton1977} D. Colton, A reflection principle for solutions to the Helmholtz equation and an application to the inverse scattering problem Glasgow Math. J. 18 (1977): 125-130	.
	   	
	   	\bibitem{Colton_Kress_Book} D. Colton and R. Kress,  Inverse Acoustic and Electromagnetic Scattering Theory, 3nd edn, Berlin, Springer, 2013.
	   	
	   	\bibitem{CFH03} F. Collino, M. Fares and H. Haddar,
	   	Numerical and analytical studies of the linear sampling method in electromagnetic inverse scattering problems, Inverse Problems, 19 (2003): 1279-1298.
	   	
	   
	   	\bibitem{Diaz_Ludford-Reflection principle} J. B. Diaz and G. S. Ludford, Reflection principles for linear elliptic second order partial
	   	differential equations with constant coefficients, Ann. Mat. Pura Appl., 39 (1955): 87-95.
	   
	   	\bibitem{Elshner_Yamamoto_polygonal}J. Elschner and M. Yamamoto, Uniqueness in determining polygonal sound-hard obstacles with a single
	   	incoming wave, Inverse Problems, 22  (2006): 355-64.
	   	
	   	\bibitem{EY2010}
	   	J. Elschner and M. Yamamoto, Uniqueness in inverse elastic scattering with  finitely many incident waves, Inverse Problems, 26 (2010): 045005.		
	   	\bibitem{Elschner_Hu_Elastic}J. Elschner and G. Hu, Uniqueness and factorization method for inverse elastic scattering with a single incoming wave, Inverse Problems, 35 (2019): 094002.
	   	\bibitem{FI} A. Friedman and V. Isakov, On the uniqueness in the inverse conductivity problem with one
	   	measurement, Indiana Univ. Math. J. 38 (1989): 563-579.	
	   	\bibitem{HNS}{N. Honda, G. Nakamura and M. Sini, Analytic extension and reconstruction of obstacles from few measurements for elliptic second order operators, Math. Ann.,  355 (2013): 401-427.}
	   
	   	
	   	\bibitem{HuLi2020}G. Hu and J. Li, Inverse source problems in an inhomogeneous medium with a single far-field pattern, to appear in: SIAM J. Math. Anal., 2020.
	   	
	   	\bibitem{HuLiu2014} G. Hu and X. Liu, Unique determination of balls and polyhedral scatterers with a single point source wave, Inverse Problems, 30 (2014): 065010.	
	   	
	   	\bibitem{Ik2000}  M. Ikehata,
	   	On reconstruction in the inverse conductivity problem with one measurement, Inverse Problems, 16 (2000):  785--793.
	   	
	   	\bibitem{Ik1999} M. Ikehata, Reconstruction of a source domain from the Cauchy data, Inverse Problems, 15 (1999): 637--645.
	   	
	   	
	   	
	   	\bibitem{Kirsch08}  A. Kirsch and N. Grinberg, {\em The Factorization Method for Inverse Problems}, Oxford Univ. Press, 2008.
	   	
	   	\bibitem{Kirch2004}A. Kirsch,  The factorization method for Maxwell's equations, Inverse Problems, 20 (2004): S117.
	   	
	   	
	   	
	   	
	   	\bibitem{KPS}S. Kusiak, R. Potthast and J. Sylvester,
	   	\emph{A `range test' for determining scatterers with unknown physical properties}, Inverse Problems, 19 (2003): 533--547.
	   	
	   	\bibitem{KS} S. Kusiak and J. Sylvester,
	   	The scattering support,
	   	Communications on Pure and Applied Mathematics, 56 (2003):
	   	1525--1548.
	   	
	   	
	   	\bibitem{Kress2001} R. Kress, Uniqueness in inverse obstacle scattering for electromagnetic waves, Proceedings of the URSI General Assembly 2002, Maastricht.  Full proceedings available on CD via http://emctrans.virtualave.net/ursi/publications.htm
	   	
	   	\bibitem{LiuC}C. Liu,
	   	An inverse obstacle problem: a uniqueness theorem for balls,
	   	In: Chavent G., Sacks P., Papanicolaou G., Symes W.W. (eds) {\em Inverse Problems in Wave Propagation}, The IMA Volumes in Mathematics and its Applications, vol 90. Springer, New York, NY, 1997.
	   	
	   	\bibitem{Liu_Zou_}H.Y. Liu  and J. Zou, Uniqueness in an inverse acoustic obstacle scattering problem for both sound-hard and sound-soft polyhedral scatterers, Inverse Problems, 22 (2006): 515-524.
	   	
	   
	   	\bibitem{Liu_Yamamoto_Zou_Reflection principle_sound-soft_Maxwell} H.Y. Liu, M. Yamamoto and J. Zou, Reflection principle for the Maxwell equations and its application to inverse electromagnetic scattering, Inverse Problems, 23 (2007): 2357-2366.
	   	
	   	\bibitem{LS} J. Liu and J. Sun, Extended sampling method in inverse scattering, Inverse Problems, 34 (2018): 085007.
	   	
	   	\bibitem{P2003} D. R. Luke and R. Potthast, The no response test --a sampling method for inverse
	   	scattering problems, SIAM J. Appl. Math. 63 (2003): 1292-1312.
	   	
	   	\bibitem{P2007} R. Potthast,  On the convergence of the no response test SIAM J. Math. Anal., 38 (2007): 1808-1824.
	   	
	   	
	   	\bibitem{PS2010}R. Potthast and M. Sini. The No-response Test for the reconstruction of polyhedral objects in electromagnetics, J. Comp. Appl. Math, 234 (2010): 1739-1746.
	   	
	   
	   	\bibitem{MH} G. Ma and G. Hu, A data-driven approach to inverse time-harmonic acoustic scattering, in preparation.
	   	
	   	
	   	\bibitem{Monk} P. Monk, {\em Finite Element Method for Maxwell's Equations}, Oxford University Press, Oxford, 2003.
	   	
	   	\bibitem{NP2013} G. Nakamura and R. Potthast, {\em Inverse Modeling - an introduction to the theory and methods of inverse problems and data assimilation}, IOP Ebook Series,  2015.	
	   	
	   	\bibitem{Nedelec} J. C. N\'ed\'elec, {\em Acoustic and Electromagnetic Equation. Integral Representation for Harmonic Problems}, New York, Spring, 2001.
	   	
	   	
	   	
	   	
	   
	   	
	   	
	   \end{thebibliography}
   \end{document}